\documentclass[reqno]{amsart}
\usepackage[utf8]{inputenc}
\usepackage{gensymb}
\usepackage{amsmath}
\usepackage{amssymb}
\usepackage{amsthm}
\usepackage{enumitem}
\usepackage{hyperref}
\usepackage{graphicx}
\usepackage{hyperref}

\usepackage{bbm}

\renewcommand{\phi}{\varphi}

\newcommand{\N}{\mathbb{N}}
\newcommand{\Z}{\mathbb{Z}}

\setlength{\textwidth}{14cm}
\setlength{\oddsidemargin}{13mm}
\setlength{\evensidemargin}{13mm}

\title{On practical sets and $A$-practical numbers}
\author{Andrzej Kukla and Piotr Miska}
\address{
	Institute of Mathematics\\
	Faculty of Mathematics and Computer Science\\
	Jagiellonian University in Cracow\\
	ul. {\L}ojasiewicza 6, 30-348 Krak\'ow
	Poland\\
 }
\email{andrzej.kukla@student.uj.edu.pl, piotr.miska@uj.edu.pl}

\newcommand{\modd}[1]{\ \normalfont{\text{(mod }}#1)}

\newcommand{\mycomment}[1]{}

\newtheorem{theorem}{Theorem}[section]
\newtheorem{lemma}[theorem]{Lemma}
\newtheorem{hyp}[theorem]{Hypothesis}

\theoremstyle{definition}
\newtheorem{que}[theorem]{Question}
\newtheorem{remark}[theorem]{Remark}

\newtheorem{defi}[theorem]{Definition}
\newtheorem{cor}[theorem]{Corollary}
\newtheorem{ex}[theorem]{Example}
\newtheorem{obs}[theorem]{Observation}

\begin{document}

\keywords{practical sets, practical numbers, sums of divisors}
\subjclass[2020]{11P99}
\thanks{During the preparation of this article Andrzej Kukla was a participant of the tutoring programme under the Excellence Initiative at the Jagiellonian University and was a scholarship holder of the Programme for outstanding students at the Faculty of Mathematics and Computer Science of the Jagiellonian University. Piotr Miska is supported by the Grant of the Polish National Science Centre No. UMO-2019/34/E/ST1/00094.}

\maketitle

\begin{abstract}
    Let $A$ be a set of positive integers. We define a positive integer $n$ as an $A$-practical number if every positive integer from the set $\left\{1,\ldots ,\sum_{d\in A, d\mid n}d\right\}$ can be written as a sum of distinct divisors of $n$ that belong to $A$. Denote the set of $A$-practical numbers as $\text{Pr}(A)$. The aim of the paper is to explore the properties of the sets $\text{Pr}(A)$ (the form of the elements, cardinality) as $A$ varies over the power set of $\N$. We are also interested in the set-theoretic and dynamic properties of the mapping $\mathcal{PR}:\mathcal{P}(\N)\ni A\mapsto\text{Pr}(A)\in\mathcal{P}(\N)$.
\end{abstract}

\section{Introduction}\label{sec1: introduction}

We say that a positive integer $n$ is practical if and only if every integer between $1$ and $\sigma(n)$ can be represented as a sum of some distinct divisors of $n$. The concept of practical numbers was firstly introduced by A. K. Srinivasan in 1948 \cite{Srinivasan} and was later studied by several mathematicians. In particular, B. M. Stewart in 1954 \cite{Stewart} and W. Sierpiński in 1955 \cite{Sierpinski}, independently from each other, proved the Characterization Theorem for practical numbers, now known as the Stewart's criterion.
\begin{theorem}[Stewart's criterion]
    Let $n\ge 2$ be a positive integer and let $n=p_1^{\alpha_1}p_2^{\alpha_2}...p_k^{\alpha_k}$ be its prime factorization, where $p_1<p_2<...<p_k$. Then $n$ is a practical number if and only if 
    $$p_1=2\ \text{ and }\ p_{i+1}\leq \sigma\left(p_1^{\alpha_1}...p_i^{\alpha_i}\right)+1,\ \text{ for }\ i\in\{1,...,k-1\}.$$
\end{theorem}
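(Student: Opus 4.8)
The plan is to establish the two implications separately: necessity of the stated conditions is a short argument about the sizes of divisors, while sufficiency follows by induction on the number $k$ of distinct prime factors of $n$, fuelled by a lemma that allows one to adjoin a single prime power at a time.

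For necessity, suppose $n\ge 2$ is practical. Since $2\le\sigma(n)$, the integer $2$ is a sum of distinct divisors of $n$; the divisors of $n$ not exceeding $2$ lie in $\{1,2\}$ and $1+1$ is forbidden, so $2\mid n$, i.e.\ $p_1=2$. Now fix $i\in\{1,\dots,k-1\}$, put $m_i:=p_1^{\alpha_1}\cdots p_i^{\alpha_i}$, and suppose for contradiction that $p_{i+1}\ge\sigma(m_i)+2$. Set $N:=\sigma(m_i)+1$. Every divisor $d$ of $n$ either divides $m_i$ or is divisible by some $p_j$ with $j>i$, and in the latter case $d\ge p_{i+1}>N$. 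Hence the only divisors of $n$ that are $\le N$ are divisors of $m_i$, and they sum to $\sigma(m_i)=N-1<N$; since $N\le\sigma(m_i)\cdot\sigma(p_{i+1}^{\alpha_{i+1}})\le\sigma(n)$, this contradicts practicality of $n$. Therefore $p_{i+1}\le\sigma(m_i)+1$, as required.

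The core of sufficiency is the following lemma: \emph{if $m$ is a practical number, $p$ is a prime with $p\nmid m$ and $p\le\sigma(m)+1$, and $a\ge 1$, then $mp^a$ is practical.} I would prove, by induction on $j$, the stronger statement that every integer in $\bigl[0,\ \sigma(m)(1+p+\cdots+p^j)\bigr]$ is a sum of distinct divisors of $mp^j$; the case $j=0$ is the practicality of $m$, and the case $j=a$ gives the lemma since $\sigma(m)(1+p+\cdots+p^a)=\sigma(mp^a)$. For the step from $j$ to $j+1$, one first records the auxiliary inequality $p^{j+1}\le\sigma(m)(1+p+\cdots+p^j)+1$, itself an easy induction on $j$ starting from $p\le\sigma(m)+1$. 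Given a target $N$ with $0\le N\le\sigma(m)(1+\cdots+p^{j+1})$, set $q:=\min\bigl(\lfloor N/p^{j+1}\rfloor,\ \sigma(m)\bigr)$ and $r:=N-qp^{j+1}$; checking the two cases of the minimum shows $0\le q\le\sigma(m)$ and, using the auxiliary inequality, $0\le r\le\sigma(m)(1+\cdots+p^j)$. Since $m$ is practical, $q$ is a sum of distinct divisors of $m$, so $qp^{j+1}$ is a sum of distinct divisors of $mp^{j+1}$ each of $p$-adic valuation exactly $j+1$; by the induction hypothesis $r$ is a sum of distinct divisors of $mp^j$, all of $p$-adic valuation at most $j$. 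These two families are disjoint, and their union represents $N$.

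Granting the lemma, sufficiency follows by induction on $k$: the base case is $n=2^{\alpha_1}$, handled by binary expansions (every integer in $[1,2^{\alpha_1+1}-1]=[1,\sigma(2^{\alpha_1})]$ is a sum of distinct powers of $2$ dividing $n$); and if $m_i=p_1^{\alpha_1}\cdots p_i^{\alpha_i}$ is practical and $p_{i+1}\le\sigma(m_i)+1$, the lemma applied with $m=m_i$, $p=p_{i+1}$, $a=\alpha_{i+1}$ shows $m_{i+1}$ is practical. I expect the main obstacle to be the sufficiency direction, and within it the correct formulation of the strengthened inductive statement together with the auxiliary inequality $p^{j+1}\le\sigma(m)(1+\cdots+p^j)+1$ — without the latter the remainder $r$ in the decomposition $N=qp^{j+1}+r$ need not fall in the range covered by the induction hypothesis. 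The disjointness check via $p$-adic valuations is routine but should be made explicit so that the divisors used are genuinely distinct.
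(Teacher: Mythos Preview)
Your argument is correct. The necessity half is handled cleanly, and the sufficiency half via the prime-power adjunction lemma, driven by the auxiliary inequality $p^{j+1}\le\sigma(m)(1+p+\cdots+p^j)+1$ and the decomposition $N=qp^{j+1}+r$ with $q=\min\bigl(\lfloor N/p^{j+1}\rfloor,\sigma(m)\bigr)$, goes through; the $p$-adic valuation check does guarantee disjointness.

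As to comparison: the paper does not actually prove Stewart's criterion. It is stated in the introduction as a classical result, attributed to Stewart and Sierpi\'nski, and used only as motivation. What the paper \emph{does} prove is a set-theoretic generalisation: the Characterization Theorem for practical sets (a set $\{a_1<\cdots<a_m\}$ is practical iff $a_1=1$ and $a_{i+1}\le a_1+\cdots+a_i+1$) together with the Second Extension Lemma (for $A$ practical and $B=\{1,b_1,\ldots,b_m\}$, the product set $AB$ is practical iff $b_r\le S_{AB_{r-1}}+1$ for each $r$). Specialised to $A=D(m)$ and $B=\{1,p,\ldots,p^a\}$, the Second Extension Lemma recovers your adjunction lemma, and its proof uses essentially the same division-with-remainder idea you use; the paper, however, never carries out this specialisation. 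So your approach is the classical direct one, while the paper's contribution is to abstract the mechanism to arbitrary finite subsets of $\N$ and their products.
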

The above theorem is useful in the study of practical numbers. For example, it allows one to prove that the set of practical numbers is closed under multiplication or that any number has a practical multiple. Also many properties of practical numbers are similar to the ones prime numbers have. In fact, several practical analogues of theorems concerning prime numbers have already been proved, for example analogues of the Prime number theorem \cite{Weingartner}, the Goldbach conjecture \cite{Melfi} or the Twin Prime conjecture \cite{Melfi}. 

There are a few generalizations of the concept of practical numbers, for example N. Schwab and L. Thompson introduced $f$-practical numbers \cite{Thompson}, which are defined through values of a multiplicative function $f$ on the divisors of positive integers. Also S. A. Baddai considered $t$-practical numbers \cite{Baddai}, for which divisors are weighted with integer coefficients smaller than or equal to $t$. In this paper we introduce a different approach to generalizing the notion of practical numbers through restricting the set of divisors that can be used in representations of integers.

This paper is structured as follows. First, we introduce the idea of practical sets, which is a set theoretical analogue of the concept of practical numbers. Throughout all the paper $\N$ stands for the set of positive integers and $\N_0$ denotes the set of non-negative integers.
\begin{defi}
    Let $A\subset \N$. Define $S_A$ to be the sum of all elements from $A$ (we set $S_{\emptyset}=0$). That is
    $$S_A:=\sum_{a\in A}a.$$
    We say that $A\subset\N$ is a \textit{\textbf{practical set}} if and only if every non-negative integer smaller than or equal to $S_A$ can be represented as a sum of distinct elements from $A$.
\end{defi}
Such definition extends the notion of practical numbers onto the subsets of the set of natural numbers. In particular, $n\in\N$ is practical if and only if $D(n)$ is a practical set, where $D(n)$ is the set of all positive divisors of $n$. Similar concepts to this appear in literature, for example any infinite practical set is a special case of a complete sequence \cite{Hoggat}. Also G. Wiseman in notes of a few of his integer sequences on OEIS refers to practical sets as ''complete sets'' \cite{Wiseman}, similarly to Y. O. Hamidoune, A. S. Lladó and O. Serra in \cite{Serra}. Despite this, we decided to hold to the "practical sets" nomenclature in order to emphasize their connection to practical numbers. 

In Section 2 we expand the idea of practical sets by proving several theorems, such as the \nameref{3: practical sets characterization}. In particular many of the facts we prove are similar to the properties of practical numbers themselves. 

In Section 3 we exploit the concept of practical sets and the machinery proved in Section 2 to define and study the following generalization of practical numbers.
\begin{defi}
Let $A\subset\N$. We say that \textit{$n$ is $A$-practical} if and only if
$$D(n)\cap A \text{ is practical.}$$
We also define $\Pr(A)$ as the set of all $A$-practical numbers. For $A=\N$ we obtain the exact definition of practical numbers.
\end{defi}
Further we study basic properties of these numbers and focus on answering the following question.
\begin{que}\label{1: family characterization question}
    Can we characterize families of sets $\mathcal{A}$ such that for any two sets $A_1,A_2\in\mathcal{A}$ the sets of $A_1$-practical numbers and $A_2$-practical numbers are the same?
\end{que}

In Section 4 we take a closer look at the mapping $A\mapsto \Pr(A)$. In particular we investigate its image, continuity and dynamic properties.

\section{Practical sets}\label{sec3: practical sets}

In this section we prove some general properties of practical sets, many of which are very similar to the ones practical numbers have. Let us recall the definition of a practical set. 
\begin{defi}
Let $A$ be a subset of $\N$. We say that $A$ is \textbf{\textit{practical}} if and only if
$$\forall_{0\leq k\leq S_A}\exists_{D\subset A}:k=\sum_{d\in D}d.$$
\end{defi}
We begin with stating a few simple observations.
\begin{obs}
    Let $A\subset\N$.
    \begin{enumerate}
        \item $\emptyset$ is practical.
        \item If $\# A\geq 1$ and $A$ is practical, then $1\in A$.
        \item If $\# A\geq 2$ and $A$ is practical, then $2\in A$.
        \item If $A$ is an infinite practical set and $B\supset A$, then $B$ is practical.
    \end{enumerate}
\end{obs}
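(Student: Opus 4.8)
The plan is to verify each of the four claims in turn, all of which follow quickly from the definition of a practical set together with an induction/extremal argument. First, for (1): the empty set has $S_\emptyset = 0$, and the only non-negative integer $k$ with $0 \le k \le 0$ is $k = 0$, which is the empty sum of distinct elements of $\emptyset$; hence $\emptyset$ is vacuously practical. For (2): if $\# A \ge 1$ then $S_A \ge 1$, so $k = 1$ must be representable as a sum of distinct elements of $A$; the only way a sum of positive integers equals $1$ is the singleton sum $\{1\}$, forcing $1 \in A$.

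For (3): assume $\# A \ge 2$ and $A$ practical. By (2) we have $1 \in A$, and since $\# A \ge 2$ there is some other element, so $S_A \ge 1 + 2 = 3 \ge 2$. Thus $k = 2$ is representable as a sum of distinct elements of $A$. The possible representations of $2$ are the singleton $\{2\}$ or — a priori — a sum of two or more distinct positive integers, but the smallest such sum is $1 + 2 = 3 > 2$; hence the representation must be $\{2\}$, so $2 \in A$.

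For (4): let $A$ be an infinite practical set and $B \supset A$. If $B$ is infinite then $S_B = \infty$, so we must show every $k \in \N_0$ is a sum of distinct elements of $B$; but since $A$ is infinite, $S_A = \infty$ as well, so every $k \in \N_0$ is already a sum of distinct elements of $A \subset B$, and we are done. (If one wishes, the same argument covers the case $B$ finite, which cannot occur here since $B \supset A$ and $A$ is infinite.)

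The only mildly delicate point — the main "obstacle," though it is a very small one — is handling the edge cases in (2) and (3) correctly, namely checking that no representation of $1$ or $2$ can use more than one summand because the minimal sum of two distinct positive integers already exceeds the target; and in (1), being comfortable that the empty sum equals $0$ so the condition is satisfied vacuously. None of the four parts requires Stewart's criterion or any deeper machinery.
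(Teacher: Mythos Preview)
Your proof is correct in all four parts. The paper itself states this observation without proof, treating the claims as immediate from the definition; your write-up supplies exactly the short verifications one would expect, and no step is in doubt.
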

It is easy to produce new practical sets on the basis of practical sets. In particular, adding sufficiently small elements to a practical set also yields a practical set.
\begin{lemma}[First Extension lemma]\label{3: extension lemma}
    Let $A\subset \N$ be a practical set and $n\in \N$. Then $$A\cup\{n\}\text{ is practical }\Longleftrightarrow n\leq S_A+1$$
\end{lemma}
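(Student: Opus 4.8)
The plan is to prove the two implications separately: the backward direction by a two-case analysis on the size of the target integer, and the forward direction by contradiction. In both directions the infinite case is degenerate: if $A$ is infinite and practical then $S_A=\infty$, so the inequality $n\le S_A+1$ holds automatically and $A\cup\{n\}$ is again practical because every non-negative integer is already a sum of distinct elements of $A$; hence both sides of the equivalence are true. So from now on I would assume $A$ is finite, and also assume $n\notin A$ (if $n\in A$ then $A\cup\{n\}=A$ and the statement is trivial, since $n\in A$ forces $n\le S_A$).

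For the implication $\Longleftarrow$, assume $A$ is practical and $n\le S_A+1$. With $n\notin A$ we have $S_{A\cup\{n\}}=S_A+n$. Take an arbitrary integer $k$ with $0\le k\le S_A+n$ and split into cases. If $k\le S_A$, then practicality of $A$ yields a representation of $k$ by distinct elements of $A\subseteq A\cup\{n\}$, and we are done. If $S_A<k\le S_A+n$, the key observation is that $0\le k-n\le S_A$: the upper bound is immediate, and the lower bound $k\ge n$ is exactly where the hypothesis enters, since $n\le S_A+1\le k$. Then a representation of $k-n$ by distinct elements of $A$ — all of which differ from $n$, as $n\notin A$ — extends to a representation of $k$ by adjoining the summand $n$.

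For the implication $\Longrightarrow$, suppose $A\cup\{n\}$ is practical but, for contradiction, $n\ge S_A+2$. Since $A$ is finite and $n\notin A$, we have $S_{A\cup\{n\}}=S_A+n$, and I would test the integer $k=S_A+1$, which satisfies $1\le k\le S_A+n=S_{A\cup\{n\}}$. Any $D\subseteq A\cup\{n\}$ with $\sum_{d\in D}d=k$ either avoids $n$, in which case $\sum_{d\in D}d\le S_A<k$, or contains $n$, in which case $\sum_{d\in D}d\ge n\ge S_A+2>k$; in both cases $k$ is not representable, contradicting practicality of $A\cup\{n\}$. Hence $n\le S_A+1$.

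There is no real obstacle here — the argument is a short case analysis. The only points demanding care are the bookkeeping of the degenerate cases ($A$ infinite, or $n$ already in $A$) and checking that the inequality $k-n\ge 0$ in the second case of the backward direction is licensed precisely by $n\le S_A+1$; this is exactly why the threshold in the lemma is $S_A+1$ and not $S_A$.
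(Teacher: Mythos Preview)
Your proof is correct and follows essentially the same approach as the paper: the same handling of the degenerate cases ($A$ infinite, $n\in A$), the same two-case split on $k$ for the backward direction, and the same contradiction testing $k=S_A+1$ for the forward direction. The only cosmetic difference is that in the forward direction you split into ``$n\in D$'' versus ``$n\notin D$'' explicitly, whereas the paper observes directly that every element of $D$ is at most $S_A+1<n$ to rule out $n\in D$.
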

\begin{proof}If $n\in A$, then $A\cup\{n\}=A$ is practical. Also, if $A$ is infinite, then $A\subset A\cup\{n\}$, so $A\cup\{n\}$ is practical. Assume that $A$ is finite and $n\notin A$. \begin{enumerate}
    \item[($\Rightarrow$)] For the sake of contradiction assume that $n>S_A+1$. Since $A\cup\{n\}$ is practical and $S_A+1 \leq S_A+n = S_{A\cup\{n\}}$, there exists $D\subset A\cup\{n\}$ such that $S_A+1$ is a sum of elements of $D$. Each element of $D$ must be smaller than or equal to $S_A+1$, which means that $n$ cannot be an element of $D$. Therefore $D$ must be contained in $A$, which is a contradiction since $$S_A+1 = S_D\leq S_A < S_A+1.$$
    \item[($\Leftarrow$)] Let $k\leq S_{A\cup\{n\}}$. $A$ is a subset of $A\cup\{n\}$ and since $A$ is practical we know that $k\leq S_A$ can be represented as sum of distinct elements of $A\cup\{n\}$. Consider $S_A<k\leq S_{A\cup\{n\}}$. Since $S_{A\cup\{n\}}=S_A+n$ and $n\leq S_A+1$, we have
    $$-1\leq S_A-n< k-n\leq S_{A\cup\{n\}}-n=S_A.$$
    Therefore $k-n$ is representable as sum of distinct elements from $A$, hence $k$ is representable as the same sum with added $n$. Hence $A\cup\{n\}$ is practical.
\end{enumerate} 
\end{proof}
In particular we have the following characterization of finite practical sets
\begin{theorem}[Characterization Theorem]\label{3: practical sets characterization}
    Let $A\subset\N$, where $A=\{a_1,...,a_m\}$, $a_1<...<a_m$ and $m\in\N$. $A$ is practical if and only if
    $$a_1=1\quad\text{ and }\quad\forall_{i\in\{1,...,m-1\}}:a_{i+1}\leq \sum_{j=1}^{i}a_j+1.$$
\end{theorem}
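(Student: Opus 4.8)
The plan is to derive this characterization of finite practical sets directly from the \nameref{3: extension lemma} by induction on $m$, the number of elements of $A$.

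First I would handle the base case $m=1$: a one-element set $A=\{a_1\}$ is practical if and only if $a_1=1$. Indeed, if $A$ is practical then by Observation (2) we have $1\in A$, forcing $a_1=1$; conversely $\{1\}$ obviously represents both $0$ and $1$, so it is practical. The asserted condition on the $a_{i+1}$ is vacuous when $m=1$, so this case is complete.

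For the inductive step, suppose the equivalence holds for all $(m-1)$-element subsets of $\N$, and let $A=\{a_1,\dots,a_m\}$ with $a_1<\dots<a_m$. Write $B=\{a_1,\dots,a_{m-1}\}$, so that $A=B\cup\{a_m\}$ and, crucially, $a_m\notin B$ and $a_m>S_C$ is impossible to confuse with membership since $a_m$ is strictly the largest element. For the forward direction, assume $A$ is practical. The key sub-claim is that $B$ is then also practical: every $k$ with $0\le k\le S_B$ can be represented using distinct elements of $A$, and since $k\le S_B<a_m$ no such representation can use $a_m$, hence $k$ is a sum of distinct elements of $B$. By the induction hypothesis, $a_1=1$ and $a_{j+1}\le \sum_{l=1}^{j}a_l+1$ for $j\in\{1,\dots,m-2\}$. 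It remains to check $a_m\le S_B+1$, which is exactly the $(\Rightarrow)$ direction of the \nameref{3: extension lemma} applied to the practical set $B$ and the element $a_m$ (noting $A=B\cup\{a_m\}$). Conversely, if $a_1=1$ and the inequalities hold for all $i\in\{1,\dots,m-1\}$, then in particular they hold for $i\in\{1,\dots,m-2\}$, so by the induction hypothesis $B$ is practical; since $a_m\le S_B+1$, the $(\Leftarrow)$ direction of the \nameref{3: extension lemma} gives that $A=B\cup\{a_m\}$ is practical.

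I do not expect a serious obstacle here; the content has essentially been front-loaded into the \nameref{3: extension lemma}. The one point that needs a little care is the sub-claim that $A$ practical implies $B=A\setminus\{a_m\}$ practical — this uses that $a_m$ is the \emph{largest} element, so it cannot appear in any subset summing to something $\le S_B$; this is what lets us invoke the induction hypothesis and the lemma with $B$ in the role of the given practical set. One should also note explicitly that $S_B=\sum_{j=1}^{m-1}a_j$ so that the inequality $a_m\le S_B+1$ matches the $i=m-1$ instance of the displayed condition.
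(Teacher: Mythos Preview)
Your backward direction is correct and essentially identical to the paper's: both build up $\{a_1,\ldots,a_r\}$ inductively via the \nameref{3: extension lemma}.

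The forward direction, however, has a genuine gap. The inequality $S_B<a_m$ on which your sub-claim rests is simply false: take $A=\{1,2,3,4\}$, where $B=\{1,2,3\}$, $S_B=6$, and $a_m=4$. More generally $S_B\geq a_m$ as soon as $m\geq 3$ and the characterization inequalities hold with any slack. So your argument that ``no such representation can use $a_m$'' breaks down for $k\in[a_m,S_B]$, and you have not established that $B$ is practical. The statement \emph{is} true (it is exactly Corollary~\ref{3: remove max from practical set} in the paper), but there the corollary is deduced \emph{from} the Characterization Theorem, so you cannot invoke it here without circularity.

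The paper avoids this entirely by arguing the forward direction directly rather than inductively: if some $a_{i+1}>\sum_{j\le i}a_j+1$, then the number $N:=\sum_{j\le i}a_j+1\le S_A$ cannot be represented in $A$, since any subset containing some $a_l$ with $l\ge i+1$ has sum at least $a_{i+1}>N$, while any subset of $\{a_1,\ldots,a_i\}$ has sum at most $N-1$. This two-line argument gives all the inequalities at once with no need to know that $B$ is practical. If you want to keep your inductive framing, you can salvage it by proving the weaker fact that every $k<a_{m-1}$ is $B$-representable (your argument \emph{is} valid for $k<a_m$, hence for $k<a_{m-1}$) and then appealing to the equivalent practicality criterion of Corollary~\ref{3: equivalence of set practicality}; but note that in the paper that corollary is placed after the Characterization Theorem, so you would have to supply its (independent) proof first.
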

\begin{proof}
    \begin{enumerate}
        \item[($\Rightarrow$)] Assume that $A$ is practical. Since $A\neq \emptyset$ we know that $1\in A$ and since $a_1$ is the smallest element of $A$, $a_1$ must be equal to 1. For the sake of contradiction assume that 
        $$\exists_{i\in\{1,...,m-1\}}:a_{i+1}>\sum_{j=1}^{i}a_j+1.$$
        Then $S_A\geq a_{i+1}>\sum_{j=1}^{i}a_j+1$ and $\sum_{j=1}^{i}a_j+1$ cannot be represented as sum of distinct elements from $A$, which contradicts the fact that $A$ is practical.
        \item[($\Leftarrow$)] Let $A_r=\{a_1,...,a_r\}$ for $r=\{1,...,m\}$. We proceed with mathematical induction with respect to $r$ to prove that $A_r$ is practical. We know that $A_1=\{1\}$ is practical. Now assume that $A_{r-1}$ is practical. We assumed that $a_r\leq \sum_{j=1}^{r-1}a_j+1=  S_{A_{r-1}}+1$, so by \nameref{3: extension lemma} we know that $A_r$ is practical. Therefore $A_m$ is practical.
    \end{enumerate}
\end{proof}
\begin{remark}
    The theorem above is also true for infinite sets $A\subset\N$.
\end{remark}
\begin{cor}\label{3: remove max from practical set}
    If $A\subset\N$ is finite and practical, then $A\setminus\{\max A\}$ is also practical.
\end{cor}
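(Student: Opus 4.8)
The plan is to deduce this directly from the \nameref{3: practical sets characterization}. Write $A=\{a_1,\ldots,a_m\}$ with $a_1<\cdots<a_m$ and $m\in\N$. If $m=1$, then $A=\{1\}$ and $A\setminus\{\max A\}=\emptyset$, which is practical by part (1) of the opening Observation, so this case is immediate. Assume therefore $m\ge 2$, so that $A\setminus\{\max A\}=\{a_1,\ldots,a_{m-1}\}$ is a nonempty finite set whose elements, listed in increasing order, are exactly $a_1,\ldots,a_{m-1}$.

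Since $A$ is practical, the Characterization Theorem gives $a_1=1$ and $a_{i+1}\le\sum_{j=1}^i a_j+1$ for every $i\in\{1,\ldots,m-1\}$. The conditions that the Characterization Theorem requires of $A\setminus\{\max A\}$ are precisely $a_1=1$ and $a_{i+1}\le\sum_{j=1}^i a_j+1$ for $i\in\{1,\ldots,m-2\}$ — that is, a subcollection of the conditions we already know hold for $A$. Hence $A\setminus\{\max A\}$ satisfies the hypotheses of the Characterization Theorem and is therefore practical.

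There is essentially no obstacle here: the key point is simply that the defining inequalities in the Characterization Theorem are ``downward stable'' in the sense that deleting the largest element only removes one inequality and keeps $a_1=1$ intact. One could alternatively observe that this is already contained in the induction carried out in the ($\Leftarrow$) direction of the proof of the Characterization Theorem, where each truncation $A_r=\{a_1,\ldots,a_r\}$ was shown to be practical; taking $r=m-1$ yields the claim. I would keep the short direct argument above, mentioning the edge case $m=1$ explicitly so that the appeal to the Characterization Theorem (which is stated for $m\in\N$) is fully justified.
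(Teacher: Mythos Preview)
Your argument is correct and is exactly the intended one: in the paper this corollary is stated immediately after the \nameref{3: practical sets characterization} with no separate proof, precisely because the conditions for $A\setminus\{\max A\}$ are a subcollection of those for $A$. Your handling of the edge case $m=1$ and the remark that this is already implicit in the induction proving the $(\Leftarrow)$ direction are both apt.
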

Let us present some examples of families of practical sets.
\begin{ex}
    \begin{enumerate}
        \item The set $\{1,2,...,m\}$ is practical for any $m\in\N$. In particular $\N$ is practical.
        \item The set $\{F_2,F_3,...,F_m\}$ is practical for any $m\geq 2$, where $F_n$ is the $n$-th Fibonacci number defined with the following recurrence relation: $F_0=0, F_1=1, F_{n+2}=F_{n+1}+F_n$. In particular the set of all Fibonacci numbers is practical.
        \item The set that consists of $n$ first practical numbers is practical. Also the set that consists of all practical numbers is practical.
    \end{enumerate} 
\end{ex}
The \nameref{3: practical sets characterization} also allows us to greatly simplify the proof of one of the properties of practical numbers, namely that practical numbers are also quasi-practical.
\begin{defi}
    Let $n\in\N$ and define \textbf{\textit{aliquot sum}} of $n$ as the sum of all proper divisors of $n$. That is
    $$s(n):=\sum_{d\mid n,d\neq n}d=\sum_{d\in S(n)}d,$$
    where $S(n)$ is the set of all proper divisors of $n$. We say that $n$ is a \textbf{\textit{quasi-practical number}} if and only if every natural number smaller than or equal to $s(n)$ can be represented as a sum of distinct proper divisors of $n$. The notion of quasi-practical numbers was introduced by Y. Peng and K. P. S. Bhaskara Rao in \cite{Rao}.
\end{defi} 
\begin{theorem}\label{3: quasi-practical = prime + practical}
    A positive integer $n$ is quasi-practical if and only if $n$ is practical or $n$ is prime. 
\end{theorem}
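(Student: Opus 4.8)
\emph{Proof proposal.} The plan is to first translate the statement into the language of practical sets: a positive integer $n$ is quasi-practical precisely when $S(n)$, the set of its proper divisors, is a practical set. Since $D(n) = S(n) \cup \{n\}$ and $n = \max D(n)$ when $n \geq 2$, the theorem becomes: $S(n)$ is practical $\iff$ $D(n)$ is practical or $n$ is prime. I will prove the two implications separately, relying on the \nameref{3: practical sets characterization}, Corollary \ref{3: remove max from practical set}, and the \nameref{3: extension lemma}.

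For the direction ``$n$ practical or prime $\Rightarrow$ $n$ quasi-practical'': if $n$ is prime then $S(n) = \{1\}$, which is practical by the \nameref{3: practical sets characterization} (the condition is vacuous for a one-element set starting with $1$); and if $n$ is practical then $D(n)$ is a finite practical set, so $S(n) = D(n) \setminus \{\max D(n)\}$ is practical by Corollary \ref{3: remove max from practical set} (this also covers $n = 1$, where $S(1) = \emptyset$). In either case $n$ is quasi-practical.

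For the converse, assume $S(n)$ is practical and $n$ is not prime; I must show $n$ is practical. The case $n = 1$ is immediate since $D(1) = \{1\}$. So suppose $n$ is composite. Then $n$ has a divisor strictly between $1$ and $n$, hence $\# S(n) \geq 2$, and by the Observation a practical set of size at least $2$ contains $2$; thus $2 \in S(n)$, so $n$ is even and $\max S(n) = n/2$. Writing $S(n) = \{a_1 < \dots < a_m\}$ with $a_m = n/2$, the \nameref{3: practical sets characterization} applied with $i = m-1$ gives $\tfrac{n}{2} = a_m \leq \sum_{j=1}^{m-1} a_j + 1 = s(n) - \tfrac{n}{2} + 1$, so $n \leq s(n) + 1$. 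Since $S(n)$ is finite and practical, $n \notin S(n)$, and $S_{S(n)} = s(n)$, the \nameref{3: extension lemma} yields that $D(n) = S(n) \cup \{n\}$ is practical, i.e. $n$ is practical.

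The whole argument is routine bookkeeping once the reformulation in terms of $S(n)$ is in place; the step that genuinely uses the hypothesis (rather than just the combinatorics of divisors) is the inequality $n \leq s(n) + 1$ for composite $n$, obtained from the characterization bound $2a_m \leq s(n)+1$ together with the identification $a_m = n/2$ forced by ``$2$ lies in every practical set of size $\geq 2$''. The only mild care needed is to dispose correctly of the degenerate cases $n = 1$ and $n$ prime, and to check that $S(n)$ really has at least two elements so that the Observation about $2$ applies.
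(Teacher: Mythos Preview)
Your proof is correct and follows essentially the same route as the paper's. Both directions match: the ($\Leftarrow$) part uses Corollary~\ref{3: remove max from practical set} exactly as you do, and for ($\Rightarrow$) both you and the paper observe that $2\in S(n)$ forces $n$ even with $\max S(n)=n/2$, then derive $n\leq s(n)+1$ and apply the \nameref{3: extension lemma}; the only cosmetic difference is that you extract the inequality from the last condition in the \nameref{3: practical sets characterization}, whereas the paper obtains it by representing $n/2-1$ as a sum of proper divisors below $n/2$ and adding $n/2$.
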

    
\begin{proof}
\begin{enumerate}
    \item[($\Rightarrow$)] Assume that $n$ is quasi-practical and so $S(n)$ is practical. If $n$ is prime, then the implication holds. Assume that $n$ is not a prime number. Then $n$ has at least two positive proper divisors and since $S(n)$ is practical, $2$ must be an element of $S(n)$. This means that $n$ is even, therefore $\frac n2$ is also an element of $S(n)$. Practicality of $S(n)$ implies that there exists $D\subset S(n)$ such that $\sum_{d\in D}d=\frac n2-1$. Each element of $D$ is smaller than $\frac n2$, so we have
    $$n -1= \frac n2 + \frac n2 -1 = \frac n2 + \sum_{d\in D}d \leq S_{S(n)}.$$
    It follows from the \nameref{3: extension lemma} that $S(n)\cup\{n\}=D(n)$ is practical, which means that $n$ is practical.
    \item[($\Leftarrow$)] If $n$ is prime, then $s(n)=1$, so $n$ is quasi-practical. Now assume that $n$ is practical. Therefore $D(n)$ is also practical. Since $D(n)=S(n)\cup\{n\}$ and $n=\max D(n)$, it follows from Corollary \ref{3: remove max from practical set} that $S(n)$ is also practical, which means that $n$ is quasi-practical. 
\end{enumerate}
\end{proof}
Practical sets possess a few properties that are very similar to the ones practical numbers have.
\begin{theorem}
    Let $A\subset\N$ be finite. The set $A$ is practical if and only if
    $$\forall_{a\in A}\exists_{D\subset A}:a-1=\sum_{d\in D}d.$$
\end{theorem}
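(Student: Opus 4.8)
The plan is to prove both implications directly, leaning on the \nameref{3: practical sets characterization} for the forward direction and on the \nameref{3: extension lemma} for the converse. Write $A=\{a_1,\ldots,a_m\}$ with $a_1<\cdots<a_m$.

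For the forward implication, assume $A$ is practical. Fix $a=a_i\in A$. By the \nameref{3: practical sets characterization} we know $a_1=1$ and $a_{j+1}\le\sum_{k=1}^{j}a_k+1$ for every $j$; in particular $a_i\le\sum_{k=1}^{i-1}a_k+1$, so $a_i-1\le\sum_{k=1}^{i-1}a_k=S_{A_{i-1}}$, where $A_{i-1}=\{a_1,\ldots,a_{i-1}\}$. Since $A_{i-1}$ is itself practical (it is an initial segment of a practical set — this follows from the Characterization Theorem, or inductively from Corollary \ref{3: remove max from practical set}), the number $a_i-1$ is a sum of distinct elements of $A_{i-1}\subset A$. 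This handles every $a\in A$ at once; note the case $i=1$ gives $a_1-1=0$, the empty sum.

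For the converse, suppose that for every $a\in A$ there is $D\subset A$ with $a-1=\sum_{d\in D}d$. I would show by induction on $r$ that each initial segment $A_r=\{a_1,\ldots,a_r\}$ is practical. Taking $a=a_1$, the only way to write $a_1-1$ as a sum of (nonnegative, distinct, positive) elements of $A$ is the empty sum, forcing $a_1-1=0$, i.e. $a_1=1$, so $A_1$ is practical. Assume $A_{r-1}$ is practical. Apply the hypothesis to $a=a_r$: there is $D\subset A$ with $a_r-1=\sum_{d\in D}d$. Every $d\in D$ satisfies $d\le a_r-1<a_r$, hence $d\in\{a_1,\ldots,a_{r-1}\}=A_{r-1}$, so $D\subset A_{r-1}$ and therefore $a_r-1=S_D\le S_{A_{r-1}}$, i.e. $a_r\le S_{A_{r-1}}+1$. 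By the \nameref{3: extension lemma}, $A_r=A_{r-1}\cup\{a_r\}$ is practical. Hence $A=A_m$ is practical.

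The only mildly delicate point — and the one I would state carefully rather than the "main obstacle" — is the observation that the subset $D$ witnessing $a_r-1=\sum_{d\in D}d$ cannot use any element $\ge a_r$: this is immediate because all elements are positive, so a sum of distinct elements one of which is $\ge a_r$ would be $\ge a_r>a_r-1$. Everything else is a routine assembly of the Extension lemma and the Characterization Theorem, so the write-up should be short.
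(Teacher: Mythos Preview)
Your proof is correct, and the converse direction is essentially identical to the paper's: induction on the initial segments $A_r$, using that the witnessing set $D$ for $a_r-1$ must lie in $A_{r-1}$ and then applying the \nameref{3: extension lemma}. The only difference is in the forward direction, where the paper is more direct: since $A$ is practical and $a-1\le S_A$ for every $a\in A$, the very definition of a practical set yields a representation of $a-1$ as a sum of distinct elements of $A$; your detour through the \nameref{3: practical sets characterization} and the practicality of $A_{i-1}$ is correct but unnecessary.
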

\begin{proof}
    \begin{enumerate}
        \item[($\Rightarrow$)] $A$ is practical, so each number smaller than $S_A$ is represented as some sum of distinct elements from $A$. In particular, for each $a\in A$ the number $a-1$ is represented as such sum.
        \item[($\Leftarrow$)] Let $A=\{a_1,...,a_m\}$, where $a_1<...<a_m$ and $A_r=\{a_1,...,a_r\}$ for $r\in\{1,...,m\}$. We proceed by induction with respect to $r$ to prove that $A_r$ is practical. For the base case we prove that $A_1=\{a_1\}$ is practical. We know that there exists $D\subset A$ such that $\sum_{d\in D}d=a_1-1$, so each $d\in D$ is smaller than $a_1$, but $a_1$ is the smallest element of $A$, hence $D$ must be empty and $a_1-1=0$. Hence $A_1=\{1\}$, therefore it is practical. Now assume that $A_{r-1}$ is practical. We know that there exists $D\subset A$ such that $\sum_{d\in D}d=a_r-1$, so each $d\in D$ is smaller than $a_r$. Therefore $D\subset A_{r-1}$ and
        $$a_r = \sum_{d\in D}d+1\leq \sum_{a\in A_{r-1}}a+1 = \sum_{j=1}^{r-1}a_j+1,$$
        so from the \nameref{3: extension lemma} it follows that $A_{r-1}\cup\{a_r\}=A_r$ is practical.
    \end{enumerate}
\end{proof}
\begin{cor}\label{3: equivalence of set practicality}
    Let $\emptyset\neq A\subset \N$ be finite. The set $A$ is practical if and only if
    $$\forall_{0\leq k<\max A}\exists_{D\subset A}:k=\sum_{d\in D}d.$$
\end{cor}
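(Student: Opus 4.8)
The plan is to obtain this corollary as an immediate consequence of the theorem stated immediately before it, which says that a finite set $A\subset\N$ is practical if and only if $a-1$ is a sum of distinct elements of $A$ for every $a\in A$. So the entire argument reduces to comparing the two representability conditions, and no induction or appeal to the \nameref{3: extension lemma} is needed beyond what that theorem already packages.

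For the forward implication I would argue directly from the definition: if $A$ is practical, then every integer $k$ with $0\leq k\leq S_A$ is a sum of distinct elements of $A$. Since $A$ is a nonempty set of positive integers, $\max A$ is one of its summands and hence $\max A\leq S_A$; therefore every $k$ with $0\leq k<\max A$ also satisfies $0\leq k\leq S_A$ and is thus representable. This gives the stated condition.

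For the reverse implication I would verify the hypothesis of the preceding theorem. Assume that every $k$ with $0\leq k<\max A$ is a sum of distinct elements of $A$, and fix an arbitrary $a\in A$. Then $1\leq a\leq \max A$, so $0\leq a-1<\max A$, and by the assumption $a-1$ is a sum of distinct elements of $A$. As $a\in A$ was arbitrary, the preceding theorem applies and shows that $A$ is practical.

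I do not expect a genuine obstacle here: the only points needing care are the elementary inequalities $a\leq\max A\leq S_A$ (for all $a\in A$) and $a-1\geq 0$, and both hold precisely because $A$ is a nonempty subset of $\N$ — exactly the hypotheses of the corollary. A quick sanity check on small cases ($A=\{1\}$ is practical and the condition is vacuous beyond $k=0$; $A=\{2\}$ is not practical and the condition fails at $k=1$) confirms that the bound $k<\max A$, rather than $k\leq\max A$ or $k\leq S_A$, is the correct one.
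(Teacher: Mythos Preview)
Your proposal is correct and matches the paper's approach: the paper gives no explicit proof, simply labeling this a corollary of the preceding theorem, and your argument does exactly that---the reverse implication is reduced to checking the hypothesis of that theorem, while the forward implication is immediate from the definition via $\max A\leq S_A$.
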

\begin{cor}
    If $n\in\N$, then all numbers smaller than $n$ are representable as sums of distinct divisors of $n$ if and only if all numbers smaller than $\sigma(n)$ are representable as sums of distinct divisors of $n$.
\end{cor}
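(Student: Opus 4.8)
The plan is to deduce this immediately from Corollary~\ref{3: equivalence of set practicality} by taking $A=D(n)$, the (finite, nonempty) set of all positive divisors of $n$. The two quantities appearing in the statement are precisely the two natural parameters of this set: $\max D(n)=n$ and $S_{D(n)}=\sigma(n)$; moreover, by definition, $n$ is practical exactly when $D(n)$ is a practical set.

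First I would note that ``all numbers smaller than $\sigma(n)$ are representable as sums of distinct divisors of $n$'' is just a restatement of the practicality of $D(n)$. Indeed, practicality of $D(n)$ asks that every $k$ with $0\le k\le S_{D(n)}=\sigma(n)$ be a sum of distinct divisors of $n$; but the single extra value $k=\sigma(n)$ is trivially the sum of \emph{all} divisors of $n$, so the condition for $0\le k\le\sigma(n)$ is equivalent to the condition for $0\le k<\sigma(n)$ (the value $k=0$ being the empty sum in either case). On the other hand, Corollary~\ref{3: equivalence of set practicality} applied to $A=D(n)$ states that $D(n)$ is practical if and only if every $k$ with $0\le k<\max D(n)=n$ is a sum of distinct elements of $D(n)$, i.e.\ if and only if all numbers smaller than $n$ are representable as sums of distinct divisors of $n$. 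Chaining the two equivalences gives the desired statement.

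There is no genuine obstacle here; the corollary is essentially a transcription of Corollary~\ref{3: equivalence of set practicality} into the language of divisors. The only minor point requiring a word of justification is the treatment of the endpoints $n$ and $\sigma(n)$, both of which are handled by the trivial observation that $n$ itself and the full divisor sum $\sigma(n)$ are each sums of distinct divisors of $n$.
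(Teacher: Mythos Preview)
Your proof is correct and is exactly the approach the paper intends: the corollary is stated without proof immediately after Corollary~\ref{3: equivalence of set practicality}, and specializing that result to $A=D(n)$ (so that $\max A=n$ and $S_A=\sigma(n)$) is precisely what is meant. Your brief remark handling the endpoints $n$ and $\sigma(n)$ is the only detail needed to make the transcription airtight.
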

The following lemma will be useful for proving two more properties of practical sets similar to ones practical numbers also possess. For two sets $A,B\subset\N$ we define their product as 
$$AB:=\{ab:a\in A,b\in B\}.$$
\begin{lemma}[Second Extension lemma]\label{3: second extension lemma}
    Let $A\subset\N$ be practical and $B=\{1,b_1,...,b_m\}$, where $b_1<...<b_m$. Also let $B_0=\{1\}$ and $B_r=\{1,b_1,...,b_r\}$ for $r\in\{1,...,m\}$. $AB$ is practical if and only if 
    $$\forall_{r\in\{1,...,m\}}:b_r\leq S_{AB_{r-1}}+1$$
\end{lemma}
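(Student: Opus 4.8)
The plan is to prove both implications by leveraging the First Extension Lemma applied to the sets $AB_0 \subset AB_1 \subset \cdots \subset AB_m = AB$, since $AB = \bigcup_{r=0}^m AB_r$ and each step adds the ``block'' $Ab_r$ rather than a single element. The key structural observation is that $AB_{r-1}$ and $AB_{r-1} \cup Ab_r = AB_r$ differ by the set $Ab_r \setminus AB_{r-1}$, all of whose elements are multiples of $b_r$, hence at least $b_r$; so the smallest new element we must be able to ``reach'' is governed by $b_r$, which is exactly the condition in the statement.

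For the forward direction, I would argue by contradiction in the style of the ($\Rightarrow$) direction of the First Extension Lemma: if $b_r > S_{AB_{r-1}} + 1$ for some $r$, then $S_{AB_{r-1}}+1 \le S_{AB} = S_{AB_m}$, so practicality of $AB$ gives a subset $D \subset AB$ summing to $S_{AB_{r-1}}+1$. Every element of $D$ is $\le S_{AB_{r-1}}+1 < b_r \le b_{r+1} \le \cdots \le b_m$, so $D$ can contain no multiple of $b_r, b_{r+1}, \dots, b_m$; since $A$ is practical we have $1 \in A$, and every element of $AB$ is either in $AB_{r-1}$ or is a multiple of some $b_j$ with $j \ge r$, so in fact $D \subset AB_{r-1}$, forcing $S_{AB_{r-1}}+1 = S_D \le S_{AB_{r-1}}$, a contradiction.

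For the reverse direction, I would induct on $r$ to show each $AB_r$ is practical. The base case $AB_0 = A\{1\} = A$ is practical by hypothesis. For the inductive step, assume $AB_{r-1}$ is practical and $b_r \le S_{AB_{r-1}}+1$; I want to conclude $AB_r = AB_{r-1} \cup Ab_r$ is practical. Here I would either (a) enumerate the elements of $Ab_r$ that are not already in $AB_{r-1}$ in increasing order $c_1 < c_2 < \cdots$ and add them one at a time via the First Extension Lemma, checking at each stage that the next element is at most (current sum)$+1$; or (b) more cleanly, note that since $A$ is practical and $1 \in A$, the set $Ab_r$ is an ``$A$-scaled'' copy whose partial sums fill gaps of size $b_r$, and combine this with the bound $b_r \le S_{AB_{r-1}}+1$. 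Concretely, with $A = \{a_1 = 1 < a_2 < \cdots\}$ one has $A_i b_r$ satisfies $a_{i+1} b_r \le (S_{A_i}+1) b_r = S_{A_i b_r} + b_r \le S_{A_i b_r} + S_{AB_{r-1}} + 1 = S_{AB_{r-1} \cup A_i b_r} + 1$, so the First Extension Lemma applies at every stage and $AB_r$ is practical.

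The main obstacle I anticipate is the bookkeeping in the reverse direction: one must be careful that elements of $Ab_r$ may already lie in $AB_{r-1}$ (so the ``new'' elements form only a subset of $Ab_r$, and removing duplicates does not affect practicality since adding an element already present is harmless), and one must verify that the running sum after incorporating $AB_{r-1}$ together with an initial segment $A_i b_r$ is large enough to accommodate the next multiple $a_{i+1} b_r$. The inequality $b_r \le S_{AB_{r-1}}+1$ is used precisely once, to launch this chain (ensuring the very first new multiple of $b_r$, namely $b_r$ itself if $1 \in A$, does not exceed $S_{AB_{r-1}}+1$), after which practicality of $A$ propagates the bound through the rest of the block.
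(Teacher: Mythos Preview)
Your forward direction is correct and matches the paper's argument. For the reverse direction your route differs from the paper's: the paper does not iterate the First Extension Lemma but instead gives a direct representation via the division algorithm, writing any $k<(\max A)\,b_r$ as $k=qb_r+t$ with $0\le q<\max A$ and $0\le t<b_r$, representing $q$ by a subset $D_q\subset A$ and $t$ by a subset $D_t\subset AB_{r-1}$, and noting that $D_qb_r$ and $D_t$ are disjoint because every element of $D_t$ is $<b_r$. This sidesteps entirely the overlap bookkeeping you flagged.

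Your iterated First Extension approach is viable, but the displayed chain contains a real gap at the final step: the equality $S_{A_ib_r}+S_{AB_{r-1}}+1=S_{AB_{r-1}\cup A_ib_r}+1$ holds only when $A_ib_r$ and $AB_{r-1}$ are disjoint, whereas you yourself observe that overlap may occur. In general $S_{AB_{r-1}\cup A_ib_r}=S_{AB_{r-1}}+S_{A_ib_r}-S_{A_ib_r\cap AB_{r-1}}$, so what you have actually shown is $a_{i+1}b_r\le S_{A_ib_r}+S_{AB_{r-1}}+1$, which can exceed $S_{AB_{r-1}\cup A_ib_r}+1$ and hence does not feed into the First Extension Lemma. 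The repair is short once you see it: every element of $A_ib_r$ is at least $b_r$, so the overlap misses the set $T:=AB_{r-1}\cap\{1,\ldots,b_r-1\}$ entirely; and since $AB_{r-1}$ is practical with $b_r\le S_{AB_{r-1}}+1$, the Characterization Theorem (applied to the least element of $AB_{r-1}$ that is $\ge b_r$, or to the whole set if there is none) gives $S_T\ge b_r-1$. Then $S_{AB_{r-1}\cup A_ib_r}=S_{AB_{r-1}\setminus A_ib_r}+S_{A_ib_r}\ge S_T+S_{A_ib_r}\ge (b_r-1)+S_{A_ib_r}$, which is precisely the lower bound you need.
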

\begin{proof}
    \begin{enumerate}
        \item[($\Rightarrow$)] For the sake of contradiction assume that there exists $r_0\in\{1,...,m\}$ such that $b_{r_0}>S_{AB_{r_{0}-1}}+1$. We can notice that 
        $$AB = AB_{r_0-1}\cup (AB\setminus AB_{r_0-1})=AB_{r_0-1}\cup A\{b_{r_0},b_{r_0+1},...,b_m\},$$
        and $\min(A\{b_{r_0},b_{r_0+1},...,b_m\})=b_{r_0}$, so from the assumption $b_{r_0}>S_{AB_{r_{0}-1}}+1$. It follows that $S_{AB_{r_{0}-1}}+1$ cannot be represented as a sum of distinct elements of $AB$, which contradicts the assumption of practicality of $AB$.
        \item[($\Leftarrow$)] Let $\max A = m_A$. We proceed by induction with respect to $r$ to prove that $AB_{r}$ is practical for $r\in\{0,...,m\}$. For the base case we notice that $AB_0=A$, which indeed is practical. Now we assume that $AB_{r-1}$ is practical and consider $0\leq k < m_Ab_r$. Then
        $$k=qb_r+t,\ \text{ where }\ q\in \{0,1,...,m_A-1\}\ \text{ and }\ t\in\{0,1,...,b_r-1\}.$$
        If $q=0$, then $k=t<b_r$, therefore $k$ is representable as a sum of distinct elements of $AB_r$. Now assume that $q>0$. Both $A$ and $AB_{r-1}$ are practical and we know that $t\leq b_r-1\leq S_{AB_{r-1}}$, thus there exist sets $D_q\subset A$, $D_t\subset AB_{r-1}$ such that 
        $$q=\sum_{d_q\in D_q}d_q\ \text{ and }\ t=\sum_{d_t\in D_t}d_t.$$
        If $q=0$, then Since $t\leq b_r-1$ we know that $\forall_{d_t\in D_t}:d_t<b_r$, so each element $d_qb_r$ is greater than each $d_t$. Hence, $k$ is representable as a sum of distinct elements of $AB_r$ as $$k=\sum_{d_q\in D_q}d_qb_m + \sum_{d_t\in D_t}d_t.$$
        It follows from the Corollary \ref{3: equivalence of set practicality} that $AB_r$ is practical.
    \end{enumerate}
\end{proof}
\begin{remark}
    The above lemma is also true for infinite sets $A,B \subset \N$.
\end{remark}
Here is a special case of the above lemma.
\begin{cor}\label{3: product of sets theorem - special case with A cup B}
    Let $A\subset\N$ be practical and $\{1\}\subset B\subset\N$ be finite. Then $$A\cup B \text{ is practical}\  \Longrightarrow \ AB \text{ is practical}.$$
\end{cor}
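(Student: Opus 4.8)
The plan is to reduce the statement to the \nameref{3: second extension lemma}. First I would clear away the degenerate cases: if $A=\emptyset$ then $AB=\emptyset$ is practical and there is nothing to do, and if $A$ is infinite then $AB\supseteq A\cdot\{1\}=A$ is a superset of an infinite practical set, hence is itself practical; so I may assume $A$ is finite and nonempty, which (by practicality) forces $1\in A$. Write $B=\{1,b_1,\dots,b_m\}$ with $b_1<\dots<b_m$ and, exactly as in the lemma, put $B_0=\{1\}$ and $B_r=\{1,b_1,\dots,b_r\}$. By the \nameref{3: second extension lemma} it then suffices to verify the single family of inequalities $b_r\le S_{AB_{r-1}}+1$ for $r\in\{1,\dots,m\}$.

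To get one such inequality I would argue as follows. Since $b_r\in A\cup B$ and $A\cup B$ is practical, the number $b_r-1$ (which is at most $S_{A\cup B}$) can be written as $\sum_{d\in D}d$ for some $D\subseteq A\cup B$ consisting of distinct elements; as each summand is at most $b_r-1<b_r$, in fact $D\subseteq\bigl(A\cap\{1,\dots,b_r-1\}\bigr)\cup\{1,b_1,\dots,b_{r-1}\}$. The crucial point is that every element appearing here already lies in $AB_{r-1}$: an element $d\in A$ equals $d\cdot 1$ and $1\in B_{r-1}$, while an element $b_j$ with $j\le r-1$ (and $1$ itself) equals $1\cdot b_j$ and $1\in A$. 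Hence $D\subseteq AB_{r-1}$, so $b_r-1=S_D\le S_{AB_{r-1}}$, i.e. $b_r\le S_{AB_{r-1}}+1$, as wanted.

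With all these inequalities in hand the hypothesis of the \nameref{3: second extension lemma} is met, and we conclude that $AB$ is practical. I expect the only genuinely delicate step to be the middle one — the observation that the subset of $A\cup B$ provided by practicality can be reinterpreted as a subset of the product $AB_{r-1}$ — and this hinges entirely on $1$ belonging simultaneously to $A$ and to each $B_{r-1}$. Everything else is routine bookkeeping and a direct invocation of results already proved.
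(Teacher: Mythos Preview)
Your proof is correct and follows essentially the same route as the paper: both reduce to the \nameref{3: second extension lemma} by verifying $b_r\le S_{AB_{r-1}}+1$ for each $r$. The paper obtains this inequality by invoking the \nameref{3: practical sets characterization} to get $b_r\le S_A+\sum_{j=1}^{r-1}b_j+1\le S_{AB_{r-1}}+1$, whereas you argue directly from a representation of $b_r-1$ inside $A\cup B$ and observe that the summands already lie in $AB_{r-1}$; the two verifications are equivalent in spirit, and your handling of the degenerate cases ($A=\emptyset$ or $A$ infinite) is a welcome bit of extra care.
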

\begin{proof}
Let $B=\{1,b_1,...,b_m\}$, where $b_1<...<b_m$. Since $A\cup B$ is practical it follows from the \nameref{3: practical sets characterization} that
$$\forall_{r\in\{1,...,m\}}:b_r\leq S_A+\sum_{j=1}^{r-1}b_j+1\leq S_{AB_{r-1}}+1.$$
Therefore $A$ and $B$ satisfy assumptions of the \nameref{3: second extension lemma}, hence $AB$ is practical.
\end{proof}
With the help of the Corollary \ref{3: product of sets theorem - special case with A cup B} we can prove that, similarly to practical numbers, product of a practical set and its subset is practical, but also that product of two practical sets is a practical set.
\begin{cor}
    If $A\subset\N$ is practical and $\{1\}\subset B\subset A$, then $AB$ is also practical.
\end{cor}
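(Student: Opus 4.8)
The plan is to deduce this from Corollary \ref{3: product of sets theorem - special case with A cup B}, after separating a trivial infinite case. The key observation is that, since $B\subset A$, we have $A\cup B=A$, which is practical by hypothesis; so Corollary \ref{3: product of sets theorem - special case with A cup B} immediately gives that $AB$ is practical — as long as $B$ is finite, which is the standing assumption of that corollary. Note that the finiteness hypothesis is automatically satisfied in the relevant case: if $A$ is finite then $B\subset A$ forces $B$ to be finite as well.

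It remains to treat the case where $A$ (equivalently $B$) is infinite. Here I would appeal to the fourth item of the Observation: every superset of an infinite practical set is practical. Since $1\in B$, we have $A=A\cdot\{1\}\subset AB$, so $AB$ is a superset of the infinite practical set $A$ and is therefore practical. Combining the two cases — $A$ finite, handled by Corollary \ref{3: product of sets theorem - special case with A cup B} via $A\cup B=A$, and $A$ infinite, handled by Observation (4) via $A\subset AB$ — completes the argument, with no substantial new computation required.

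I do not anticipate a genuine obstacle: the statement is, in effect, a corollary of a corollary, and the only point needing a moment's care is checking that the finiteness hypothesis of Corollary \ref{3: product of sets theorem - special case with A cup B} is legitimately available in the finite case. Alternatively, one could give a unified proof by invoking the \nameref{3: second extension lemma} together with the Remark extending it to infinite sets: writing $B=\{1,b_1,\ldots\}$ with $b_1<b_2<\cdots$, one has $A\subseteq AB_{r-1}$ for every $r$ because $1\in B_{r-1}$, and since $b_r\in A$ and $A$ is practical the number $b_r-1\le S_A$ is representable, whence $b_r\le S_A+1\le S_{AB_{r-1}}+1$. This verifies the hypothesis of the \nameref{3: second extension lemma} directly, but the case split via the already-established corollary and observation is cleaner, so that is the route I would present.
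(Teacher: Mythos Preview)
Your proposal is correct and follows the same route as the paper: the paper's proof is the single line ``Since $B\subset A$ we have $A\cup B=A$, which is practical, so from Corollary \ref{3: product of sets theorem - special case with A cup B} it follows that $AB$ is practical.'' Your version is in fact more careful than the paper's, since you explicitly treat the case where $A$ (and hence possibly $B$) is infinite --- the paper silently applies a corollary stated only for finite $B$, presumably relying on the Remark that the underlying \nameref{3: second extension lemma} extends to infinite sets.
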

\begin{proof}
    Since $B\subset A$ we have $A\cup B = A$, which is practical, so from the Corollary \ref{3: product of sets theorem - special case with A cup B} it follows that $AB$ is practical.
\end{proof}
\begin{cor}
    If $A,B\subset\N$ are practical, then $AB$ is also practical.
\end{cor}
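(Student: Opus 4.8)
The plan is to deduce this from the \nameref{3: second extension lemma}, after reducing to the case where $B$ is finite. If $A$ or $B$ is empty then $AB=\emptyset$ is practical, so I may assume both are nonempty; then $1\in A$ and $1\in B$ by the simple observations at the start of the section. Consequently $A=A\{1\}\subset AB$ and $B=\{1\}B\subset AB$, so if either $A$ or $B$ is infinite then $AB$ contains an infinite practical set and is itself practical (again by those observations). Hence it suffices to treat the case where $A$ and $B$ are both finite.

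So write $B=\{1,b_1,\dots,b_m\}$ with $b_1<\dots<b_m$, and put $B_0=\{1\}$ and $B_r=\{1,b_1,\dots,b_r\}$ exactly as in the \nameref{3: second extension lemma}. To apply that lemma to the practical set $A$ and the set $B$, I need to verify $b_r\le S_{AB_{r-1}}+1$ for every $r\in\{1,\dots,m\}$. Since $B$ itself is practical, the \nameref{3: practical sets characterization} gives $b_r\le S_{B_{r-1}}+1$. Now $1\in A$ forces $B_{r-1}\subset AB_{r-1}$, and since all members of $AB_{r-1}$ are positive this yields $S_{B_{r-1}}\le S_{AB_{r-1}}$; chaining the two inequalities gives precisely $b_r\le S_{AB_{r-1}}+1$. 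The \nameref{3: second extension lemma} then shows $AB$ is practical, completing the proof.

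I do not expect a genuine obstacle here: the one thing to spot is the monotonicity $S_{B_{r-1}}\le S_{AB_{r-1}}$, which is exactly where practicality of $A$ (through $1\in A$) is used. As an alternative route one could first observe — directly from the \nameref{3: practical sets characterization}, by merging the two increasing enumerations of $A$ and $B$ — that $A\cup B$ is practical, and then invoke Corollary~\ref{3: product of sets theorem - special case with A cup B} in the finite case; but passing through the \nameref{3: second extension lemma} is shorter, since it absorbs the required inequality in a single line.
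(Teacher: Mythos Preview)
Your argument is correct. The reduction to the finite, nonempty case matches the paper, but in that case the paper proceeds differently: it assumes without loss of generality that $\max B\le\max A$, observes that then every $b\in B$ satisfies $b\le\max A\le S_A+1$, and applies the \nameref{3: extension lemma} to conclude that $A\cup B$ is practical; Corollary~\ref{3: product of sets theorem - special case with A cup B} then finishes. You instead bypass $A\cup B$ entirely and verify the hypothesis of the \nameref{3: second extension lemma} directly, using the practicality of $B$ (via the \nameref{3: practical sets characterization}) together with the inclusion $B_{r-1}\subset AB_{r-1}$. Your route is a touch more direct, since the paper's detour through Corollary~\ref{3: product of sets theorem - special case with A cup B} itself unwinds to the \nameref{3: second extension lemma}; the paper's route, on the other hand, makes the auxiliary fact that $A\cup B$ is practical explicit and avoids invoking the \nameref{3: practical sets characterization} for $B$. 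One small quibble with your closing remark: the practicality of $A$ is not used \emph{only} through $1\in A$ --- it is also a standing hypothesis of the \nameref{3: second extension lemma} that you invoke.
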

\begin{proof}
    If $A$ or $B$ is infinite, then $AB$ is practical, because $A\subset AB$ or $B\subset AB$, respectively. Assume that both $A$ and $B$ are finite. Without loss of generality we can assume that $\max B\leq \max A$. Thus each $b\in B$ satisfies $b\leq \max A\leq S_A+1$, so it follows from the \nameref{3: extension lemma} that $A\cup B$ is practical. Corollary \ref{3: product of sets theorem - special case with A cup B} implies that $AB$ is practical.
\end{proof}

\section{$A$-practical numbers}\label{sec4: A-practical numbers}
In the following section we prove basic properties of $A$-practical numbers and partly answer Question 1.4 stated in Section 1.  Let us recall the definition of $A$-practical numbers. 

\begin{defi}
Let $A\subset\N$. We say that \textit{$n$ is $A$-practical} if and only if
$$D(n)\cap A \text{ is practical.}$$
We also define $\Pr(A)$ as the set of all $A$-practical numbers. For $A=\N$ we obtain the exact definition of practical numbers.
\end{defi}

We first note that the absence of $1$ or $2$ in the set $A\subset\N$ allows us to indicate exactly which natural numbers are $A$-practical.
\begin{obs}\label{4: 1 and 2 characterization} Let $A\subset\N$.
\begin{enumerate}
\item 
If $1\not\in A$, then $n\in\N$ is $A$-practical if and only if $D(n)\cap A=\emptyset$. 
\item 
If $1\in A$ and $2\not\in A$, then $n\in\N$ is $A$-practical if and only if $D(n)\cap A=\{1\}$. 
\end{enumerate}
\end{obs}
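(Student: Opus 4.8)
The plan is to reduce the statement entirely to the elementary observations about practical sets recorded right after the definition of a practical set in Section~\ref{sec3: practical sets}: the empty set is practical, a nonempty practical set must contain $1$, and a practical set with at least two elements must contain $2$. The only additional input is the triviality that $1$ divides every $n\in\N$, so $1\in D(n)$ always.

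For part (1), I would check both implications directly. If $D(n)\cap A=\emptyset$, then $D(n)\cap A$ is practical, hence $n$ is $A$-practical by definition. Conversely, if $n$ is $A$-practical then $D(n)\cap A$ is practical; were it nonempty it would have to contain $1$, but $1\notin A$ forces $1\notin D(n)\cap A$, a contradiction, so $D(n)\cap A=\emptyset$.

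For part (2), the key preliminary remark is that, since $1\in A$ and $1\mid n$, the set $D(n)\cap A$ always contains $1$; in particular it is never empty and always has $\{1\}$ as a subset. If $D(n)\cap A=\{1\}$, then it is practical (the singleton $\{1\}$ represents $0$ and $1$), hence $n$ is $A$-practical. Conversely, if $n$ is $A$-practical then $D(n)\cap A$ is practical, and if it were strictly larger than $\{1\}$ it would have at least two elements and therefore contain $2$, contradicting $2\notin A$; hence $D(n)\cap A=\{1\}$.

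I do not expect any real obstacle here: the statement is a direct bookkeeping consequence of the size-one and size-two observations together with $1\mid n$. The only point that needs a moment's attention is the observation in part (2) that $D(n)\cap A$ is automatically nonempty, since this is exactly what upgrades ``$D(n)\cap A$ is practical'' to the sharper conclusion ``$D(n)\cap A=\{1\}$''.
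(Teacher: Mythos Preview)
Your proof is correct and follows essentially the same approach as the paper's own proof: both reduce directly to the elementary observations that $\emptyset$ is practical, a nonempty practical set contains $1$, and a practical set with at least two elements contains $2$. If anything, your version is slightly more explicit in part (2), where you spell out that $1\in A$ and $1\mid n$ force $1\in D(n)\cap A$, a point the paper leaves implicit.
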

\begin{proof}
\begin{enumerate}
    \item Let $n\in \N$. If $D(n)\cap A=\emptyset$, then $D(n)\cap A$ is practical, so $n$ is $A$-practical. If $D(n)\cap A\neq\emptyset$, then $D(n)\cap A$ cannot be practical, since $1\notin D(n)\cap A$. Therefore $n$ is not $A$-practical.
    \item Let $n\in \N$. If $D(n)\cap A=\{1\}$, $D(n)\cap A$ is practical, so $n$ is $A$-practical. If $D(n)\cap A\neq \{1\}$, then $D(n)\cap A$ cannot be practical, since $2\notin D(n)\cap A$. Therefore $n$ is not $A$-practical.
\end{enumerate}
\end{proof}
\begin{theorem}\label{4: D(n) theorem}
Let $n,m\in\N$. The following conditions are equivalent:
\begin{enumerate}[label=(\roman*)]
    \item $n$ is $D(m)$-practical,
    \item $m$ is $D(n)$-practical,
    \item $\gcd(n,m)$ is practical.
\end{enumerate}
\end{theorem}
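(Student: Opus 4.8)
The plan is to notice that the three conditions are not merely equivalent but assert the practicality of one and the same set, so that the theorem reduces to a single elementary identity about divisors. Unwinding the definition of $A$-practicality, condition (i) says that $D(n)\cap D(m)$ is a practical set, condition (ii) says that $D(m)\cap D(n)$ is a practical set, and condition (iii) says that $D(\gcd(n,m))$ is a practical set. Since intersection is commutative, (i) and (ii) are literally the same assertion, so all the content lies in the identity
$$D(n)\cap D(m)=D(\gcd(n,m)).$$

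First I would establish this set equality. A positive integer $d$ belongs to $D(n)\cap D(m)$ exactly when $d\mid n$ and $d\mid m$; by the characteristic property of the greatest common divisor — every common divisor of $n$ and $m$ divides $\gcd(n,m)$, while $\gcd(n,m)$ itself divides both $n$ and $m$ — this holds if and only if $d\mid\gcd(n,m)$, that is, $d\in D(\gcd(n,m))$. Hence the two sets coincide.

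Once this identity is recorded, the theorem follows by chaining equivalences: $n$ is $D(m)$-practical $\iff$ $D(n)\cap D(m)$ is practical $\iff$ $D(\gcd(n,m))$ is practical $\iff$ $\gcd(n,m)$ is practical, and by symmetry of $D(n)\cap D(m)$ in $n$ and $m$ the same chain links (ii) to (iii). I do not expect any genuine obstacle here; the only point deserving care is to state and use the gcd identity explicitly, since it is the sole ingredient beyond definition-chasing.
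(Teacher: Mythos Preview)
Your argument is correct and matches the paper's proof essentially verbatim: both reduce everything to the identity $D(n)\cap D(m)=D(\gcd(n,m))$ and then chase definitions. The only difference is cosmetic---you spell out why the gcd identity holds and note the symmetry for (i)$\Leftrightarrow$(ii) explicitly, whereas the paper states the identity without proof and handles (ii)$\Leftrightarrow$(iii) by saying ``analogous.''
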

\begin{proof} Let us prove the ($(i)\Longleftrightarrow (iii)$) part, $(ii)\Longleftrightarrow (iii)$ is analogous. Notice that 
$$D(n)\cap D(m) = D(\gcd(n,m))$$
so $n$ is $D(m)$-practical $\Longleftrightarrow$ $D(n)\cap D(m)$ is practical $\Longleftrightarrow$ $D(\gcd(n,m))$ is practical $\Longleftrightarrow$ $\gcd(n,m)$ is practical.
\end{proof}

\begin{cor}
Based on Theorem \ref{4: D(n) theorem} we can notice that all natural numbers are $D(2^n)$-practical for any $n\in\N$. Therefore, if $D(2^\infty):=\{2^n:n=0,1,2,...\}$, then all natural numbers are $D(2^\infty)$-practical. Also, $2^m$ is $D(n)$-practical for any $n,m\in\N$;

\end{cor}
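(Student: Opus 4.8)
The plan is to reduce all three assertions to the single elementary fact that every power of two is a practical number, and then invoke Theorem \ref{4: D(n) theorem}.

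First I would record that $2^s$ is practical for every $s\in\N_0$ (in particular $1=2^0$ is practical). This is immediate from the \nameref{3: practical sets characterization} applied to the divisor chain $D(2^s)=\{1,2,4,\ldots,2^s\}$: its least element is $1$, and each later element satisfies $2^{i+1}=(1+2+\cdots+2^i)+1$, so the criterion holds with equality at every step. (Equivalently, one may quote Stewart's criterion.)

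For the first assertion, fix $n\in\N$ and let $k\in\N$ be arbitrary. Applying the equivalence $(i)\Leftrightarrow(iii)$ of Theorem \ref{4: D(n) theorem} to the pair $k,2^n$ shows that $k$ is $D(2^n)$-practical if and only if $\gcd(k,2^n)$ is practical; but $\gcd(k,2^n)$ is a power of two and hence practical by the previous paragraph, so every $k\in\N$ is $D(2^n)$-practical. For the second assertion, fix $k\in\N$ and let $2^s$ be the largest power of two dividing $k$; then $D(k)\cap D(2^\infty)=\{2^i:2^i\mid k\}=D(2^s)$ is practical, so $k$ is $D(2^\infty)$-practical. The third assertion is the first one read through the symmetry of Theorem \ref{4: D(n) theorem}: by $(i)\Leftrightarrow(ii)$, $2^m$ is $D(n)$-practical if and only if $n$ is $D(2^m)$-practical, and the latter holds by the first assertion (with exponent $m$).

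There is no genuine obstacle here; the statement is a direct corollary of Theorem \ref{4: D(n) theorem}, the only point needing a word of justification being that powers of two are practical, which the \nameref{3: practical sets characterization} delivers at once.
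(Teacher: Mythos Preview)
Your proof is correct and follows exactly the route the paper intends: the corollary is stated without proof, with the justification implicit in the phrase ``Based on Theorem \ref{4: D(n) theorem}'', and you have spelled out precisely that justification---reducing everything to the practicality of powers of two via the equivalence with $\gcd$ being practical. There is nothing to add.
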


Most of subsets $A\subset\N$ do not have the same property as $D(2^n)$, mainly due to prime numbers.
\begin{obs}\label{4: prime observation}
    Let $A\subset \N$ and $p\in\mathbb{P}_{\geq 3}$. If $p\in A$, then $p$ is not $A$-practical and if $p\notin A$, then $p$ is $A$-practical.
\end{obs}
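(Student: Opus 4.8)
The plan is to reduce everything to the trivial structure of the divisor set of a prime. Since $p$ is prime, $D(p)=\{1,p\}$, so $D(p)\cap A$ is one of the four sets $\emptyset$, $\{1\}$, $\{p\}$, $\{1,p\}$, determined solely by whether $1\in A$ and whether $p\in A$. The whole statement then follows by checking which of these four sets are practical.

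First I would dispose of the case $p\notin A$. Here $D(p)\cap A\subset\{1\}$, so it equals $\emptyset$ or $\{1\}$. The empty set is practical (first Observation of this section), and $\{1\}$ is practical (directly, or via the \nameref{3: practical sets characterization}). In either case $D(p)\cap A$ is practical, so $p$ is $A$-practical.

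Next I would treat the case $p\in A$. Now $D(p)\cap A$ equals $\{p\}$ or $\{1,p\}$. If it equals $\{p\}$, it is not practical, since a nonempty practical set must contain $1$ while $p\geq 3\neq 1$. If it equals $\{1,p\}$, I write it as $\{a_1,a_2\}$ with $a_1=1<a_2=p$; the \nameref{3: practical sets characterization} would require $a_2\leq a_1+1=2$, but $p\geq 3$, so the condition fails and $\{1,p\}$ is not practical. In both subcases $D(p)\cap A$ is not practical, hence $p$ is not $A$-practical.

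There is no genuine obstacle here; the only point worth flagging is that the hypothesis $p\geq 3$ is used essentially — it is exactly what makes both subcases of $p\in A$ fail — whereas for $p=2$ one has $D(2)=\{1,2\}$, which is practical, so the statement would be false for $p=2$.
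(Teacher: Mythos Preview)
Your proof is correct and follows essentially the same approach as the paper: enumerate the four possible values of $D(p)\cap A$ according to whether $1$ and $p$ lie in $A$, and check practicality in each case. The paper's version is terser, simply asserting that neither $\{p\}$ nor $\{1,p\}$ is practical when $p\geq 3$, while you spell out the justification via the \nameref{3: practical sets characterization}; otherwise the arguments are identical.
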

\begin{proof}
    If $p\in A$, then $D(p)\cap A$ is equal to $\{p\}$ or $\{1,p\}$. Since $p\geq 3$, neither of these sets is practical, hence $p$ is not $A$-practical. If $p\notin A$, then $D(p)\cap A$ is equal to $\{1\}$ or $\emptyset$. Since both of these sets are practical, $p$ is $A$-practical.
\end{proof}
Above observation can be generalized with the help of quasi-practical numbers.
\begin{obs}\label{4: S(n) subset of A}
    Let $A\subset\N$ and $n\in\mathbb{N}$.
    \begin{enumerate}
        \item If $D(n)\subset A$, then $n$ is $A$-practical if and only if $n$ is practical.
        \item If $S(n)\subset A$ and $n\notin A$, then $n$ is $A$-practical if and only if $n$ is quasi-practical.
    \end{enumerate}
\end{obs}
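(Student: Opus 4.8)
The plan is to reduce both statements to the definitions by computing the set $D(n)\cap A$ explicitly under the given hypotheses, since $n$ being $A$-practical means precisely that $D(n)\cap A$ is a practical set.

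For part (1), if $D(n)\subset A$ then $D(n)\cap A=D(n)$. Hence $n$ is $A$-practical if and only if $D(n)$ is practical, and the latter is by definition equivalent to $n$ being a practical number. For part (2), I would use the decomposition $D(n)=S(n)\cup\{n\}$ with $n\notin S(n)$, so that
$$D(n)\cap A=\left(S(n)\cap A\right)\cup\left(\{n\}\cap A\right).$$
The hypothesis $S(n)\subset A$ gives $S(n)\cap A=S(n)$, and the hypothesis $n\notin A$ gives $\{n\}\cap A=\emptyset$, whence $D(n)\cap A=S(n)$. Therefore $n$ is $A$-practical if and only if $S(n)$ is a practical set.

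It then remains to observe that "$S(n)$ is a practical set" is exactly the definition of $n$ being quasi-practical: by definition $S_{S(n)}=\sum_{d\in S(n)}d=s(n)$ is the aliquot sum of $n$, so practicality of $S(n)$ says precisely that every non-negative integer at most $s(n)$ is a sum of distinct elements of $S(n)$, i.e. a sum of distinct proper divisors of $n$.

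I do not expect any genuine obstacle here; the only point requiring a (trivial) remark is the identification $S_{S(n)}=s(n)$ linking the "practical set" formulation with the aliquot-sum formulation of quasi-practicality, and the elementary set identity $D(n)=S(n)\cup\{n\}$ with $n\notin S(n)$. Both parts are immediate consequences of unwinding the relevant definitions once $D(n)\cap A$ has been pinned down.
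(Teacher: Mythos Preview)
Your argument is correct and is essentially the paper's own proof: both parts follow immediately once you compute $D(n)\cap A$ under the stated hypotheses and compare with the definitions of practical and quasi-practical numbers. The paper's one-line proof additionally cites Theorem~\ref{3: quasi-practical = prime + practical}, but that reference is not actually needed for the observation as stated---your direct unwinding of the definitions suffices.
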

\begin{proof}
Follows from definitions of practical numbers and quasi-practical numbers and from the Theorem \ref{3: quasi-practical = prime + practical}.   
\end{proof}

\subsection{Expansion and removal theorems}
Let us recall the Question \ref{1: family characterization question} from Section 1
\begin{que}
    Can we characterize families of sets $\mathcal{A}$ such that
    $$\forall_{A_1,A_2\in \mathcal{A}}:\Pr(A_1)=\Pr(A_2).$$
\end{que}
In the following subsection we partly answer this question by introducing expansion and removal theorems.

\begin{defi}
    We define a partial order $\prec$ on the power set of $\N$ as follows
    $$A\prec A'\Longleftrightarrow A\subseteq A'\text{ and }\Pr(A)\subseteq \Pr(A').$$
\end{defi}
\begin{que}\label{4: minimal maximal question}
    If $A\subset\N$, do there always exist sets $A'$ and $A''$ such that $A'\prec A\prec A''$, $A'$ is minimal and $A''$ is maximal with respect to the order $\prec$?
\end{que}
\begin{obs}\label{4: minimal maximal corrolary} Let $A\subset\N$.
\begin{enumerate}
        \item We have $A\prec (A\cup\{1\})$.    
        \item $A$ is minimal if and only if $\forall_{a\in A}:D(a)\cap A = \{a\}$ and $A\neq\{1\}$. In particular, if $A\subset\mathbb{P}$, then $A$ is minimal.
        \item If $A$ is minimal and $$A\subset A'\subset (\{a_1^{\beta_1}a_2^{\beta_2}\ldots a_n^{\beta_n}: n\in\N, a_1,a_2,\ldots, a_n\in A,\beta_1,\beta_2,\ldots,\beta_n\in \N_0\}\setminus\{1\}),$$
        then $\Pr (A)=\Pr (A')$. In particular, $A\prec A'$.
        
\end{enumerate}
\end{obs}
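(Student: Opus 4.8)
The plan is to treat the three parts in turn; the workhorse throughout is the First Extension Lemma in its simplest instance -- adjoining $1$ to a practical set again gives a practical set -- together with the elementary fact (already noted) that a nonempty practical set must contain $1$. \emph{Part (1).} Since $1\mid n$ for every $n\in\N$, we have $D(n)\cap(A\cup\{1\})=(D(n)\cap A)\cup\{1\}$, and if $D(n)\cap A$ is practical then so is its union with $\{1\}$ (it either already contains $1$, or it is empty and becomes $\{1\}$); hence $\Pr(A)\subseteq\Pr(A\cup\{1\})$, and since $A\subseteq A\cup\{1\}$ trivially, $A\prec A\cup\{1\}$. Running the same computation with $A\setminus\{1\}$ in place of $A$ shows that if $1\in A$ then $A\setminus\{1\}\prec A$ with $A\setminus\{1\}\subsetneq A$; so no set containing $1$ is minimal, whence a minimal set avoids $1$ and in particular differs from $\{1\}$.

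\emph{Part (2).} By the previous observation it then suffices, for $A$ with $1\notin A$, to prove that $A$ is minimal if and only if $D(a)\cap A=\{a\}$ for every $a\in A$. For the ``if'' direction, take any $A'\subsetneq A$ and pick $a_0\in A\setminus A'$: then $D(a_0)\cap A=\{a_0\}$ is not practical (a singleton other than $\{1\}$ is not, and $a_0\neq1$ as $1\notin A$), whereas $D(a_0)\cap A'=D(a_0)\cap A\cap A'=\{a_0\}\cap A'=\emptyset$ is practical, so $a_0\in\Pr(A')\setminus\Pr(A)$ and $A'\not\prec A$; hence $A$ is minimal. For the ``only if'' direction, suppose some $a_0\in A$ has a proper divisor $b_0\in A$. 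Because $1\notin A$ and every nonempty practical set contains $1$, for any $B\subseteq\N$ with $1\notin B$ one has $\Pr(B)=\{n\in\N:\text{no element of }B\text{ divides }n\}$; applying this to $B=A$ and to $B=A\setminus\{a_0\}$, and using that $a_0\mid n$ forces $b_0\mid n$ with $b_0\in A\setminus\{a_0\}$, we get $\Pr(A\setminus\{a_0\})=\Pr(A)$, so $A\setminus\{a_0\}\prec A$ with $A\setminus\{a_0\}\subsetneq A$ and $A$ is not minimal. The concluding clause is then immediate: if $A\subseteq\mathbb{P}$ then for each $a\in A$ we have $D(a)=\{1,a\}$ and $1\notin\mathbb{P}$, so $D(a)\cap A=\{a\}$ and $A\neq\{1\}$. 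I expect this ``only if'' implication to be the main obstacle: the naive strategy of deleting a largest element of $A$ that is a multiple of another element and then invoking the First Extension Lemma does not work (for $A=\{1,4\}$ one must delete $1$, not $4$; for $A=\{2,6\}$ the relevant extension inequality is false even though deleting $6$ changes nothing), and what rescues the argument is the case split according to whether $1\in A$, combined with the collapse $\Pr(B)=\{n:\text{no }b\in B\text{ divides }n\}$ valid when $1\notin B$.

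\emph{Part (3).} If $A$ is minimal then $1\notin A$ by part (2), and since $A'$ is contained in the set displayed in the statement, from which $1$ has been removed, we also have $1\notin A'$; hence, by the collapse above, $\Pr(A)=\{n\in\N:\text{no element of }A\text{ divides }n\}$ and similarly for $A'$. Now $A\subseteq A'$ gives $\Pr(A')\subseteq\Pr(A)$, while conversely every $m\in A'$ is a product of powers of elements of $A$ and is different from $1$, hence is divisible by some $a\in A$; so an integer divisible by no element of $A$ is divisible by no element of $A'$ either, giving $\Pr(A)\subseteq\Pr(A')$. Therefore $\Pr(A)=\Pr(A')$, and in particular $A\prec A'$.
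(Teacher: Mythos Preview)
Your proof is correct and follows essentially the same route as the paper's. The only differences are organizational: in Part~(1) you argue directly via the First Extension Lemma that adjoining $1$ to a practical set preserves practicality, whereas the paper invokes the collapse $\Pr(A)=\{m:D(m)\cap A=\varnothing\}$ for $1\notin A$; and in Part~(2) you front-load the observation that minimal sets avoid $1$ (derived from Part~(1)), while the paper handles the case $1\in A$ inline during the contradiction argument. The core mechanism in all three parts---reducing to the description $\Pr(B)=\{n:\text{no element of }B\text{ divides }n\}$ once $1\notin B$---is identical.
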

    \begin{proof}
\begin{enumerate}
        \item If $1\notin A$, then based on the Observation \ref{4: 1 and 2 characterization} we know that
        $$\Pr(A)=\{m\in\N:D(m)\cap A=\emptyset\}.$$
        Therefore if $n\in\Pr(A)$, then $n\in\Pr(A\cup\{1\})$, because $D(n)\cap(A\cup\{1\})=\{1\}$.
     \item \begin{enumerate}
         \item[($\Rightarrow$)] Assume that $\forall_{a\in A}:D(a)\cap A = \{a\}$ and $A\neq\{1\}$. We have that $1\notin A$, so from the Observation \ref{4: 1 and 2 characterization} it follows that $$\Pr(A)=\{m\in\N:D(m)\cap A=\emptyset\}.$$
        Now let $B\subsetneq A$. Therefore $\exists_{a\in A}:a\notin B$. Notice that $a$ is $B$-practical, because there are no divisors of $a$ in $B$, but $a$ is not $A$-practical, because $D(a)\cap A=\{a\}$ and $\{a\}$ is not practical. Hence $\Pr(B)\nsubseteq \Pr(A)$, thus $B\nprec A$, therefore $A$ is minimal.
        \item[($\Leftarrow$)] Assume that $A$ is minimal and for the sake of contradiction suppose that $A=\{1\}$ or $\exists_{a\in A}:D(a)\cap A\supsetneq \{a\}$. If the former is true, then $\emptyset\prec A$, which is a contradiction with minimality of $A$. If the latter is true, then 
        $$\exists_{b\neq a}:b\in D(a)\cap A.$$
        If $1\in A$, then $(A\setminus\{1\})\prec A$, contradiction. If $1\notin A$, then based on the Observation \ref{4: 1 and 2 characterization} we know that
        $$\Pr(A)=\{m\in\N:D(m)\cap A=\emptyset\}.$$
        If $m$ is divisible by $a$, then $m$ is divisible by $b$, so $A\setminus\{a\}\prec A$, contradiction.
     \end{enumerate}
        \item Notice that $1\notin A'$, so from Observation \ref{4: 1 and 2 characterization} $$\Pr(A')=\{m\in\N:D(m)\cap A'=\emptyset\}.$$
        If $D(m)\cap A'=\emptyset$ for some $m\in\mathbb{N}$, then $D(m)\cap A=\emptyset$ since $A\subset A'$. On the other hand, if $D(m)\cap A=\emptyset$ for some $m\in\mathbb{N}$, then $D(m)\cap A'=\emptyset$ since
        $$A'\subset (\{a_1^{\beta_1}a_2^{\beta_2}\ldots a_n^{\beta_n}: n\in\N, a_1,a_2,\ldots, a_n\in A,\beta_1,\beta_2,\ldots,\beta_n\in (\N\cup\{0\})\}\setminus\{1\}).$$
        Therefore $\Pr(A)= \Pr(A')$, so $A\prec A'$.
\end{enumerate}        
    \end{proof}
As it turns out, the answer to the Question \ref{4: minimal maximal question} is positive.
\begin{theorem}\label{4: minimal maximal theorem}
    If $A\subset\mathbb{N}$, then there exist sets $A'\subset A\subset A''$ such that 
    $$A'\prec A\prec A'',$$
    where $A'$ is minimal and $A''$ is maximal with respect to the order $\prec$.
\end{theorem}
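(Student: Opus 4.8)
The plan is to treat the two parts separately: an explicit construction for the minimal set $A'$, and Zorn's lemma for the maximal set $A''$.

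For $A'$, I would take the set of divisibility-minimal elements of $A\setminus\{1\}$, that is, $A'=\{a\in A\setminus\{1\}:\text{ no }b\in A\setminus\{1\}\text{ satisfies }b\mid a\text{ and }b\neq a\}$. This is an antichain for the divisibility relation and it does not contain $1$, so $A'\neq\{1\}$ and $D(a)\cap A'=\{a\}$ for every $a\in A'$; hence by Observation \ref{4: minimal maximal corrolary}(2) the set $A'$ is minimal. It remains to show $A'\prec A$. The inclusion $A'\subseteq A$ is immediate. Since $1\notin A'$, Observation \ref{4: 1 and 2 characterization} gives $\Pr(A')=\{n\in\N:D(n)\cap A'=\emptyset\}$. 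Because every positive integer has only finitely many divisors, each $a\in A\setminus\{1\}$ is divisible by some element of $A'$ (take a divisibility-minimal element of $A\setminus\{1\}$ among the divisors of $a$). Therefore $D(n)\cap A'=\emptyset$ forces $D(n)\cap A\subseteq\{1\}$, so $D(n)\cap A$ is practical and $n\in\Pr(A)$; thus $\Pr(A')\subseteq\Pr(A)$ and $A'\prec A$.

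For $A''$, I would apply Zorn's lemma to the family $\mathcal{E}:=\{B\subseteq\N:A\subseteq B\text{ and }\Pr(A)\subseteq\Pr(B)\}$ ordered by inclusion, which is nonempty since $A\in\mathcal{E}$. The crucial step is that $\mathcal{E}$ is closed under unions of chains. If $\mathcal{C}\subseteq\mathcal{E}$ is a nonempty chain and $n\in\Pr(A)$, then since $D(n)$ is finite the family $\{D(n)\cap B:B\in\mathcal{C}\}$ is a chain of subsets of a finite set, so it has a largest member $D(n)\cap B_0$; consequently $D(n)\cap\bigcup\mathcal{C}=D(n)\cap B_0$, which is practical because $n\in\Pr(A)\subseteq\Pr(B_0)$, so $n\in\Pr(\bigcup\mathcal{C})$ and $\bigcup\mathcal{C}\in\mathcal{E}$. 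Zorn's lemma then yields a maximal element $A''$ of $\mathcal{E}$; then $A\subseteq A''$ and $\Pr(A)\subseteq\Pr(A'')$, i.e. $A\prec A''$. Finally $A''$ is maximal in $(\mathcal{P}(\N),\prec)$: if $A''\prec B$ then $A\subseteq A''\subseteq B$ and $\Pr(A)\subseteq\Pr(A'')\subseteq\Pr(B)$, so $B\in\mathcal{E}$ with $A''\subseteq B$, whence $B=A''$. (A symmetric Zorn argument applied to $\{B\subseteq A:\Pr(B)\subseteq\Pr(A)\}$, using $\bigcap\mathcal{C}$ and again the finiteness of $D(n)$, gives an alternative uniform proof of the minimal part.)

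The step I expect to be the main obstacle is the maximal part. The naive guess $A''=\N$ fails, since $A$-practical numbers need not be practical, and no obvious closure operation on $A$ produces a maximal set; Zorn's lemma is the natural tool, and its only nonroutine hypothesis — that $\Pr$ behaves well along unions of chains — relies entirely on $D(n)$ being finite, so that intersecting a chain of subsets with $D(n)$ stabilizes.
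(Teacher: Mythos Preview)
Your proposal is correct and follows essentially the same route as the paper. Your set $A'$ coincides with the paper's choice $\{a\in A:a>1,\ D(a)\cap A\subseteq\{1,a\}\}$, and the paper likewise obtains $A''$ via Zorn's lemma on $\{B\subseteq\N:A\prec B\}$, with the chain condition handled by the same finiteness-of-$D(n)$ stabilization argument you describe; the only cosmetic difference is that the paper runs Zorn with respect to $\prec$ rather than inclusion, whereas you use inclusion and then check afterwards that an inclusion-maximal element of $\mathcal{E}$ is $\prec$-maximal in $\mathcal{P}(\N)$.
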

    \begin{proof}
        We set $A'=\{a\in A:a>1,D(a)\cap A\subset\{1,a\}\}\subset A$. Set $A'$ satisfies
        $$\forall_{a' \in A'}:D(a')\cap A'=\{a'\},$$
        so it follows from \ref{4: minimal maximal corrolary} that $A'$ is minimal and that $A'\prec A$. Now consider $\mathcal{A}=\{B\subset\N:A\prec B\}$ and let $\mathcal{C}\subset \mathcal{A}$ be a chain with respect to $\prec$. Now we have
        \begin{equation*}        \Pr\left(\bigcup_{C\in\mathcal{C}}C\right)=\bigcup_{C\in\mathcal{C}}\Pr(C).   
        \end{equation*}
        To see that, notice that for any $n\in\N$ there exists $C_n\in\mathcal{C}$ such that 
        $$\forall_{C\succ C_n}:D(n)\cap C = D(n)\cap C_n.$$
        In particular $D(n)\cap \left(\bigcup_{C\in\mathcal{C}}C\right) = D(n)\cap C_n$, so
        $$n\in\Pr\left(\bigcup_{C\in\mathcal{C}}C\right) \Longleftrightarrow n\in\Pr(C_n).$$
        Also $\Pr(C_n)\subset\bigcup_{C\in\mathcal{C}}\Pr(C)$ so:
        \begin{enumerate}[label=\textbullet]
            \item if $n\in\Pr(C_n)$, then $n\in \bigcup_{C\in\mathcal{C}}\Pr(C)$,
            \item if $n\notin\Pr(C_n)$, then $\forall_{C\prec C_n}:n\notin\Pr(C)$, but since $\forall_{C\succ C_n}:D(n)\cap C = D(n)\cap C_n$ we have $\forall_{C\succ C_c}:n\notin\Pr(C)$, so $n\notin \bigcup_{C\in\mathcal{C}}\Pr(C)$.
        \end{enumerate}
        Therefore $$n\in\bigcup_{C\in\mathcal{C}}\Pr(C) \Longleftrightarrow n\in\Pr(C_n).$$
        We know that $\bigcup_{C\in\mathcal{C}}C\in\mathcal{A}$, so from (1) it follows that $\bigcup_{C\in\mathcal{C}}C$ is an upper bound of $\mathcal{C}$. Zorn's lemma implies that there exists some maximal element $A''$ in the set $\mathcal{A}$.
    \end{proof}
\mycomment{One idea was that we might have an implication
$$n \text{ is }A\text{-practical}\Rightarrow n \text{ is }D(\text{lcm}A)\text{-practical},$$
but this is generally false due to observation \ref{4: prime observation}, because possibly many new primes are generated in $D(\text{lcm}A)$. It seems like the set $A$ needs to contain all the proper divisors of $n$ to make the implication
$$
m \text{ is }A\text{-practical}\Rightarrow m \text{ is }(A\cup\{n\})\text{-practical}
$$
true, but we have managed to prove it with an additional assumption that $\sigma(n)\geq 2n-1$.}

\begin{theorem}\label{4: expand theorem}
    Let $A\subset\N$ and $n\in\N\setminus\mathbb{P}_{\geq 3}$ such that $S(n)\subset A$, $n\notin A$ and $\sigma(n)\geq 2n-d_0-1$ where $d_0:=\inf(A\setminus D(n))$. Then $$A\prec (A\cup\{n\}).$$
\end{theorem}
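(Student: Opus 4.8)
The plan is to unwind the definition of $\prec$: since $A\subseteq A\cup\{n\}$ is immediate, everything comes down to proving $\Pr(A)\subseteq\Pr(A\cup\{n\})$, i.e.\ that an arbitrary $A$-practical number $m$ is also $(A\cup\{n\})$-practical. So I would fix $m$ with $D(m)\cap A$ practical and analyse $D(m)\cap(A\cup\{n\})$. If $n\nmid m$ the two sets are equal and there is nothing to do, so the real content is the case $n\mid m$. There $n\in D(m)$ while $n\notin A$, so $D(m)\cap(A\cup\{n\})=(D(m)\cap A)\cup\{n\}$ with $n\notin D(m)\cap A$; moreover $S(n)\subseteq D(n)\subseteq D(m)$ and $S(n)\subseteq A$ give the crucial containment $S(n)\subseteq D(m)\cap A$.

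Next I would split into two cases according to whether $D(m)\cap A$ contains an element outside $D(n)$. If $D(m)\cap A\subseteq D(n)$, then $S(n)\subseteq D(m)\cap A\subseteq D(n)\setminus\{n\}=S(n)$ forces $D(m)\cap A=S(n)$; hence $S(n)$ is practical, that is, $n$ is quasi-practical, and by Theorem~\ref{3: quasi-practical = prime + practical} together with $n\notin\mathbb{P}_{\geq 3}$ (so $n$ is practical or $n=2$) the set $D(n)$ is practical --- but $D(m)\cap(A\cup\{n\})=S(n)\cup\{n\}=D(n)$, so we are done. If instead some $d\in D(m)\cap A$ satisfies $d\notin D(n)$, then $d\in A\setminus D(n)$, whence $d\geq d_0$, and also $d\notin S(n)$, so $S(n)\cup\{d\}\subseteq D(m)\cap A$ and therefore $S_{D(m)\cap A}\geq s(n)+d\geq s(n)+d_0$. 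The hypothesis $\sigma(n)\geq 2n-d_0-1$ is precisely $s(n)+d_0\geq n-1$ once one substitutes $\sigma(n)=s(n)+n$, so $n\leq S_{D(m)\cap A}+1$, and the \nameref{3: extension lemma} applied to the practical set $D(m)\cap A$ and the number $n$ shows $(D(m)\cap A)\cup\{n\}$ is practical. In every case $m$ turns out to be $(A\cup\{n\})$-practical, which finishes the argument.

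Note that the arithmetic condition on $\sigma(n)$ is used only in the second case; the first case rests entirely on the characterization of quasi-practical numbers and on the fact that discarding $\mathbb{P}_{\geq 3}$ leaves only $n$ with $D(n)$ practical. I expect the only real obstacle to be the bookkeeping around $d_0=\inf(A\setminus D(n))$: when $A\setminus D(n)=\emptyset$ one has $d_0=+\infty$, the hypothesis is vacuous, and one must observe that the second case cannot occur then (since $D(m)\cap A\subseteq A\subseteq D(n)$), so the whole argument collapses to the first case. The small exceptional values $n=1$ and $n=2$ should also be checked directly, but they fit the first-case argument without difficulty.
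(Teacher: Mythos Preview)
Your argument is correct and close in spirit to the paper's, but the case split is organised differently. The paper splits on the size of $\sigma(n)$: in its first case $\sigma(n)\geq 2n-1$ (equivalently $s(n)\geq n-1$), so $S(n)\subseteq D(m)\cap A$ alone already gives $S_{D(m)\cap A}\geq n-1$ and the First Extension Lemma finishes; in its second case $\sigma(n)<2n-1$, it argues that $n$ is neither practical nor an odd prime, hence (via Observation~\ref{4: S(n) subset of A}) $S(n)=D(n)\cap A$ is not practical, which forces the existence of some $d\in (D(m)\cap A)\setminus D(n)$, and then the same arithmetic with $d\geq d_0$ that you use applies. You instead split on whether $D(m)\cap A$ already escapes $D(n)$. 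Your Case~B is exactly the paper's computation in its second case. Your Case~A is the contrapositive of the paper's step ``$S(n)$ not practical $\Rightarrow$ some $d$ exists'': from $D(m)\cap A=S(n)$ practical you invoke Theorem~\ref{3: quasi-practical = prime + practical} directly to get $D(n)$ practical, and this case never touches the hypothesis on $\sigma(n)$. The net effect is the same, but your decomposition is arguably a bit cleaner: it is driven by the structure of $D(m)\cap A$ rather than by an auxiliary threshold on $\sigma(n)$, it makes transparent that the $\sigma$-condition is needed only when an outside divisor is present, and it handles $d_0=+\infty$ and $n\in\{1,2\}$ without a separate check (they simply fall into Case~A).
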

\begin{proof}
    Let $m$ be an $A$-practical number. If $n\nmid m$, then $m$ is trivially $(A\cup\{n\})$-practical. Consider $m=an$ for $a\in\N$. Since $an$ is $A$-practical we know that $D(an)\cap A\text{ is practical}$ and we want to check whether $D(an)\cap (A\cup\{n\})\text{ is practical}.$
    Note that $D(an)\cap (A\cup\{n\}) = (D(an)\cap A)\cup\{n\}$. 
    \begin{enumerate}[label=\arabic*\degree]
        \item Assume that $\sigma(n)\geq 2n-1$. Since $S(n)\subset A$, we also have $S(n)\subset (D(an)\cap A)$, so 
        $$S_{D(an)\cap A}\geq n-1\ \Rightarrow\ S_{D(an)\cap A}+1\geq n.$$
        It follows from the \nameref{3: extension lemma} that $D(an)\cap (A\cup\{n\})$ is practical.
        \item Assume that $2n-d_0-1\leq \sigma(n)<2n-1$. It follows that $n$ is not practical and since $n$ is not a prime, by Observation \ref{4: S(n) subset of A} $S(n) = D(n)\cap A$ is not practical. Since $D(an)\cap A$ is practical there exists some $d\in\N$ such that $d\in((D(an)\setminus D(n))\cap A)$. We have $d\geq d_0$, and $S(n)\subset (D(an)\cap A)$, so
        $$S_{D(an)\cap A}\geq d+n-d_0-1\geq n-1\Rightarrow\ S_{D(an)\cap A}+1\geq n.$$
        It follows from the \nameref{3: extension lemma} that $D(an)\cap (A\cup\{n\})$ is practical.
        
    \end{enumerate}
\end{proof} 
Following corollary shows that any finite subset $A\subset \N$ can be extended to an infinite set.
\begin{cor} Let $A\subset \N$. If $2^n$ is the smallest power of $2$ that is not an element of $A$, then $A\prec (A\cup\{2^n\})$.
\end{cor}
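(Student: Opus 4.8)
The plan is to obtain this as a direct consequence of the Expansion Theorem (Theorem~\ref{4: expand theorem}), applied to the set $A$ together with the single integer $2^n$. So the first task is to check the three hypotheses of that theorem for this choice. Since $2^n$ is a power of two it is never an odd prime, so $2^n\in\N\setminus\mathbb{P}_{\geq 3}$; and $2^n\notin A$ holds by the very definition of $n$.

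Second, I would verify that $S(2^n)\subseteq A$. The proper divisors of $2^n$ are precisely $2^0,2^1,\ldots,2^{n-1}$, that is, all powers of two strictly below $2^n$. Because $2^n$ is the \emph{smallest} power of two not lying in $A$, each of $2^0,\ldots,2^{n-1}$ must belong to $A$, so $S(2^n)=\{2^j:0\le j\le n-1\}\subseteq A$. (When $n=0$ we have $2^0=1$ and $S(1)=\emptyset$, so this is vacuous.)

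Third, the arithmetic condition. One computes $\sigma(2^n)=1+2+\cdots+2^n=2^{n+1}-1=2\cdot 2^n-1$, which already satisfies $\sigma(2^n)\ge 2\cdot 2^n-d_0-1$ for any $d_0\ge 0$, in particular for $d_0=\inf(A\setminus D(2^n))$. Thus all hypotheses of Theorem~\ref{4: expand theorem} are met and it yields $A\prec(A\cup\{2^n\})$.

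I do not anticipate any real obstacle here; the argument is a routine verification. The only point needing a word of care is the degenerate situation in which $A\setminus D(2^n)=\emptyset$, so that $d_0$ is an infimum of the empty set: this forces $A=S(2^n)=\{1,2,\ldots,2^{n-1}\}$, and then $D(m)\cap A$ and $D(m)\cap(A\cup\{2^n\})$ are, for every $m$, sets of the form $\{1,2,4,\ldots,2^v\}$ and hence practical, so $\Pr(A)=\N=\Pr(A\cup\{2^n\})$ and the claim is trivial. (Alternatively one can sidestep the Expansion Theorem altogether: for $m\in\Pr(A)$ with $2^n\mid m$ we have $\{1,\ldots,2^{n-1}\}\subseteq D(m)\cap A$, hence $S_{D(m)\cap A}\ge 2^n-1$, and the First Extension Lemma (Lemma~\ref{3: extension lemma}) shows $(D(m)\cap A)\cup\{2^n\}=D(m)\cap(A\cup\{2^n\})$ is practical; and if $2^n\nmid m$ there is nothing to prove.)
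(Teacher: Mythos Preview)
Your proof is correct and follows the same route as the paper: both apply Theorem~\ref{4: expand theorem} after noting that $\sigma(2^n)=2^{n+1}-1=2\cdot 2^n-1$, which satisfies the required inequality. Your write-up is simply more explicit about verifying each hypothesis, and your treatment of the degenerate case $A\setminus D(2^n)=\varnothing$ is careful but ultimately unnecessary, since in that situation the inequality $\sigma(2^n)\ge 2\cdot 2^n-1$ lands us in case~$1^{\circ}$ of the proof of Theorem~\ref{4: expand theorem}, where $d_0$ plays no role.
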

\begin{proof} Let $2^n$ be the smallest power of $2$ that is not an element of $A$. $\sigma(2^n)=2^{n+1}-1$, so $2^n$ satisfies assumption of the Theorem \ref{4: expand theorem}. Hence $A\prec(A\cup\{2^n\})$.
\end{proof}

\begin{ex}\label{4: 1 2 3 example}
Let $A=\{1,2,3\}$ and $$A'=\{1,2,3,4,6,8,12,16,24,...\}=\{2^n3^\varepsilon:n\in\N\cup\{0\},\varepsilon\in\{0,1\}\}.$$ We have $\Pr(A)\subset\Pr(A')$, because $\sigma(2^n3^\varepsilon)\geq 2^{n+1}3^{\varepsilon}-1$ for any $n\in\N$ and $\varepsilon\in\{0,1\}$.
\end{ex}
We can notice that proof of the Theorem \ref{4: expand theorem} depends on the fact that $an$ is $(A\cup\{n\})$-practical for $an$ being $A$-practical. Therefore the following theorem is also true.
\begin{theorem}\label{4: expand theorem 2}
 Let $A\subset\N$ and $n\in\N$ such that $n\notin A$. Then 
 $$\left[\forall_{a\in\mathbb{N}}:(an\in\Pr(A))\Rightarrow(an\in\Pr(A\cup\{n\})\right]\Longleftrightarrow A\prec (A\cup\{n\}).$$ 
\end{theorem}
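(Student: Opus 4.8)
The plan is to observe that the only $A$-practical numbers whose $(A\cup\{n\})$-practicality is not automatic are the multiples of $n$, so that the bracketed condition already captures the full content of the inclusion $\Pr(A)\subseteq\Pr(A\cup\{n\})$. Since $n\notin A$ we have $A\subseteq A\cup\{n\}$, hence the relation $A\prec(A\cup\{n\})$ is by definition equivalent to $\Pr(A)\subseteq\Pr(A\cup\{n\})$; I would state this reduction first.

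The implication ($\Leftarrow$) is then immediate: if $\Pr(A)\subseteq\Pr(A\cup\{n\})$, then in particular every element of $\Pr(A)$ that happens to be of the form $an$ with $a\in\N$ lies in $\Pr(A\cup\{n\})$, which is the bracketed condition.

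For the implication ($\Rightarrow$), I would assume the bracketed condition and pick an arbitrary $m\in\Pr(A)$, then split into two cases according to whether $n\mid m$. If $n\nmid m$, then $n\notin D(m)$, so $D(m)\cap(A\cup\{n\})=D(m)\cap A$; the latter is practical because $m\in\Pr(A)$, hence so is the former, giving $m\in\Pr(A\cup\{n\})$. If $n\mid m$, write $m=an$ with $a\in\N$; then $an\in\Pr(A)$, and the hypothesis yields $an=m\in\Pr(A\cup\{n\})$. In both cases $m\in\Pr(A\cup\{n\})$, so $\Pr(A)\subseteq\Pr(A\cup\{n\})$, i.e.\ $A\prec(A\cup\{n\})$.

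There is no genuine obstacle in this argument; it is a short case split. The only point worth emphasizing is the elementary identity $D(m)\cap(A\cup\{n\})=D(m)\cap A$, valid exactly when $n\nmid m$, which is what confines all the work to multiples of $n$ — precisely the observation already used implicitly in the proof of Theorem \ref{4: expand theorem}.
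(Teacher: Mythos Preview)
Your proof is correct and follows essentially the same approach as the paper: both reduce $A\prec(A\cup\{n\})$ to the inclusion $\Pr(A)\subseteq\Pr(A\cup\{n\})$ and then split on whether $n\mid m$, with the non-divisible case handled by the identity $D(m)\cap(A\cup\{n\})=D(m)\cap A$. Your write-up is in fact a bit more explicit than the paper's, which just calls that case ``trivial''.
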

\begin{proof}

\begin{enumerate}
    \item[$(\Rightarrow)$] Let $m$ be an $A$-practical number. If $n\nmid m$, then $m$ is trivially $(A\cup\{n\})$-practical. Consider $m=an$ for $a\in\N$. Since $an$ is practical, we know that $an$ is $(A\cup\{n\})$-practical, therefore $\Pr(A)\subset \Pr(A\cup\{n\})$ and $A\prec (A\cup\{n\})$.
    \item[$(\Leftarrow)$] Consider $an$ for some $a\in\N$ and assume that $an$ is $A$-practical. Since $A\prec (A\cup\{n\})$ we have $an\in\Pr(A)\subset\Pr(A\cup\{n\})$.
\end{enumerate}
    
\end{proof}
\begin{remark}
In Theorem \ref{4: expand theorem 2}, if $bn\in \Pr(A)$ implies $bn\in\Pr(A\cup\{n\})$, then $abn\in \Pr(A)$ implies $abn\in\Pr(A\cup\{n\})$ for any $a\in\N$. It follows from the \nameref{3: extension lemma}. In particular if $n\in\Pr(A)$ and $S_{D(n)\cap A}+1\geq n$, then $A\prec (A\cup\{n\})$.
\end{remark}
\begin{remark}
In Theorem \ref{4: expand theorem 2} it is sufficient to check the assumption $\forall_{a\in\mathbb{N}}:(an\in\Pr(A))\Rightarrow(an\in\Pr(A\cup\{n\}))$ only for $a\in\mathbb{N}$ that are least common multiples of elements from $A\setminus D(n)$ (including the least common multiple of the empty set, being equal to $1$).
\end{remark}

Theorem \ref{4: expand theorem 2} also implies that when we want to extend set $A$ to set $B$ we can do it "step by step" by adding the smallest missing element.

\begin{theorem}\label{4: step by step theorem}
    Let $A\subsetneq B\subset\N$ and $k:=\min(B\setminus A)$. Then
    $$(A\prec B)\Rightarrow (A\prec (A\cup\{k\})).$$
\end{theorem}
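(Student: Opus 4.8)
The plan is to derive this from Theorem~\ref{4: expand theorem 2}. Since $k=\min(B\setminus A)$ we have $k\notin A$, so by that theorem it is enough to prove the implication
$$\forall_{a\in\N}:\quad ak\in\Pr(A)\ \Longrightarrow\ ak\in\Pr(A\cup\{k\}).$$
(The degenerate case $k=1$ is also immediate from Observation~\ref{4: minimal maximal corrolary}(1).) So I would fix $a\in\N$ with $ak\in\Pr(A)$, meaning $D(ak)\cap A$ is practical, and try to show that $D(ak)\cap(A\cup\{k\})$ is practical. Because $k\mid ak$, this latter set equals $(D(ak)\cap A)\cup\{k\}$, and as $k\notin A$ and $D(ak)\cap A$ is finite, the \nameref{3: extension lemma} reduces the task to proving the single inequality $k\leq S_{D(ak)\cap A}+1$.

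To get that inequality I would bring in the hypothesis $A\prec B$. From $ak\in\Pr(A)\subseteq\Pr(B)$ the set $C:=D(ak)\cap B$ is practical, and $k\in C$. The key observation — and the only point where the minimality of $k$ is used — is that $A$ and $B$ agree below $k$: any divisor $d$ of $ak$ with $d<k$ that lies in $B$ must already lie in $A$, since otherwise $d\in B\setminus A$ would contradict $k=\min(B\setminus A)$. Hence the elements of $C$ smaller than $k$ are precisely the elements of $D(ak)\cap A$ smaller than $k$, so their sum is at most $S_{D(ak)\cap A}$. Applying the \nameref{3: practical sets characterization} to the practical set $C$ at its element $k$ (which has a predecessor in $C$ whenever $k\geq2$, as then $1\in C$), we obtain
$$k\ \leq\ \Big(\sum_{c\in C,\ c<k}c\Big)+1\ \leq\ S_{D(ak)\cap A}+1,$$
which is exactly what was needed. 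Therefore $D(ak)\cap(A\cup\{k\})$ is practical, i.e.\ $ak\in\Pr(A\cup\{k\})$, and Theorem~\ref{4: expand theorem 2} gives $A\prec(A\cup\{k\})$.

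I do not anticipate a genuine obstacle; the whole argument turns on the ``agreement below $k$'' observation, which is what lets one push the Characterization-Theorem inequality for $C=D(ak)\cap B$ down to the smaller set $D(ak)\cap A$. The only points that need a little care are checking that one applies the Characterization Theorem to the correct element of $C$ (so that the relevant summands are the divisors of $ak$ lying in $B$ strictly below $k$, all of which belong to $A$) and disposing of the trivial cases $k=1$ and $k\nmid m$ — the latter being automatic in the reduction via Theorem~\ref{4: expand theorem 2}.
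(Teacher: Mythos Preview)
Your proof is correct and follows essentially the same path as the paper's: both reduce via Theorem~\ref{4: expand theorem 2} to showing $ak\in\Pr(A)\Rightarrow ak\in\Pr(A\cup\{k\})$, both use $A\prec B$ to get that $D(ak)\cap B$ is practical with $k$ as a member, and both exploit the minimality of $k$ to see that every element of $D(ak)\cap B$ below $k$ already lies in $A$, yielding $k\leq S_{D(ak)\cap A}+1$. The only cosmetic difference is that the paper phrases this last step by contradiction (assuming $k>S_{D(ak)\cap A}+1$ and observing directly that $k-1$ then has no representation from $D(ak)\cap B$), whereas you invoke the \nameref{3: practical sets characterization} in the forward direction; the content is the same.
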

\begin{proof}
    For the sake of contradiction assume that $(A\nprec (A\cup\{k\}))$. From Theorem \ref{4: expand theorem 2} it follows that $$\exists_{a\in\N}:ak\in\Pr(A)\text{ and }ak\notin\Pr(A\cup\{n\}).$$
    Now notice that since $A\prec B$ we have $ak\in\Pr(B)$ and since $k\in B$ we have $k\in D(ak)\cap B$, so $S_{D(ak)\cap B}>k-1$. It follows from the \nameref{3: extension lemma} that $k>S_{D(ak)\cap A}+1$. Every divisor of $ak$ from $B\setminus A$ is greater than or equal to $k$ so $k-1$ cannot be represented as sum of divisors of $ak$ from $B$, which contradicts the fact that $D(ak)\cap B$ is practical.
\end{proof}

\begin{ex}
    In example \ref{4: 1 2 3 example} we showed that $$A=\{1,2,3\}\prec \{2^n3^\varepsilon:n\in \N\cup\{0\},\varepsilon\in\{0,1\}\}=A'.$$ It follows from Theorem \ref{4: expand theorem 2} that we can extend $A'$ to 
    $$A''=\{2^n3^m:n,m\in\N\cup\{0\}\}.$$
    First we extend $A'$ with $9$. We can do that, because $9$ is not $A'$-practical, $18$ is $A'$-practical and 18 is $A''$-practical. Then we can extend by numbers of the form $2^n\cdot 9$, $n\in\N$, because all of them are practical and the extended set contains all of their divisors. Analogously we extend $A'$ by next powers of 3 followed by powers of 2 multiplied by those powers of $3$.

    From Theorems \ref{4: expand theorem 2} and \ref{4: step by step theorem} it also follows that $A''$ is maximal. 
    \begin{proof}
        Suppose that $\exists_{B\subset\N}:A''\prec B$ and set $k:=\min(B\setminus A'')$. We have $k=2^r3^lm$, where $(m,6)=1$ and $m\geq 5$. Now consider $$k_0=\left\{\begin{array}{ll}
            k & \text{if }r\geq 1 \\
            2k & \text{if }r=0
        \end{array}\right..$$
        Since $D(k_0)\cap A=D(2^{r'}3^l)$, where $r'=\max\{ 1,r\}$, and $2^{r'}3^l$ is $A''$-practical we know that $k_0$ is $A''$-practical. We want to prove that $S_{A''}(k_0)<k-1$. From Theorem \ref{4: expand theorem 2} it would follow that $k_0$ is not $A''\cup\{k\}$-practical and we would arrive at contradiction with $A''\prec B$.
        \begin{enumerate}[label=\arabic*\degree]
            \item If $k_0=k$, then
            $$S_{A''}(k_0)=\sigma(2^r)\sigma(3^l)=\frac{(2^{r+1}-1)(3^{l+1}-1)}{2}<2^r3^{l+1}<5\cdot 2^r3^l-1\leq k-1.$$
            \item If $k_0=2k$, then $r=0$, so
            $$S_{A''}(k_0)= \sigma(2)\sigma(3^{l})=\frac{3^{l+2}-3}{2}=\frac{9}{2}\cdot3^l-\frac32 < 5\cdot3^l -1 = k-1.$$
        \end{enumerate}
    \end{proof}
\end{ex}

\begin{ex}\label{4: 1 2 3 example extended}
    We can manually check that 
    $$\Pr(\{1,2,3\})=\Pr(\{2^n3^m:n,m\in \N_0\})=\{k\in\N:k\not\equiv 3\modd{6}\}.$$
    We can order the second set as follows
    $$\{2^n3^m:n,m\in \N_0\}=\{1,2,3,4,6,8,9,12,16,18,24,27,...\}=\{a_1,a_2,a_3,...\},$$
    where $a_1<a_2<a_3<...$. 
    Let us denote $A_k=\{a_1,a_2,...,a_k\}$. Theorem \ref{4: step by step theorem} implies that 
    $$\forall_{k\in\N, k\ge 3}:\Pr(A_k)=\{k\in\N:k\not\equiv 3\modd{6}\}.$$
\end{ex}
\begin{que}
    Can we explain the behaviour of the above sets in terms of expansion and removal theorems?
\end{que}
From Theorems \ref{4: expand theorem} and \ref{4: expand theorem 2} we know that
$$\forall_{k\in\N}:A_k\prec A_{k+1}.$$
The following theorem explains why $\Pr (A_{k+1})\subset\Pr(A_k)$.

\begin{theorem}
Let $A\subset \N$ be finite and $n\in\N$ such that $n\notin A$ and $n>\max A$. Then $$\Pr(A\cup\{n\})\subset\Pr(A).$$   
\end{theorem}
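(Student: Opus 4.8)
The plan is to take an arbitrary $m\in\Pr(A\cup\{n\})$ and show that $m\in\Pr(A)$, distinguishing the two cases $n\nmid m$ and $n\mid m$. We may assume $A\neq\emptyset$, since $\Pr(\emptyset)=\N$ and the inclusion is then trivial.

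First I would fix $m$ with $D(m)\cap(A\cup\{n\})$ practical, and note that this set is finite because $A\cup\{n\}$ is finite. If $n\nmid m$, then $D(m)\cap(A\cup\{n\})=D(m)\cap A$, so $D(m)\cap A$ is immediately practical and $m\in\Pr(A)$.

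The remaining case is $n\mid m$. Then $D(m)\cap(A\cup\{n\})=(D(m)\cap A)\cup\{n\}$, which is a finite practical set. The crucial point is that $n>\max A$ forces $d\le\max A<n$ for every $d\in D(m)\cap A\subset A$, so $n$ is the maximum of $(D(m)\cap A)\cup\{n\}$. By Corollary \ref{3: remove max from practical set}, deleting the maximum of a finite practical set preserves practicality, hence $D(m)\cap A=\big((D(m)\cap A)\cup\{n\}\big)\setminus\{n\}$ is practical and $m\in\Pr(A)$. Since $m$ was arbitrary, $\Pr(A\cup\{n\})\subset\Pr(A)$.

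I do not expect a genuine obstacle here: the statement is essentially a repackaging of Corollary \ref{3: remove max from practical set}, the only thing to watch being the observation that $n>\max A$ makes $n$ the largest element of the relevant divisor set, which is exactly what lets the Corollary be applied. A degenerate subcase is $D(m)\cap A=\emptyset$, where the set in question is $\{n\}$; this is harmless, as Corollary \ref{3: remove max from practical set} still applies (and in fact this subcase never occurs, since a nonempty practical set must contain $1<n$).
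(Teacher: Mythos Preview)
Your proof is correct and follows essentially the same approach as the paper: split on whether $n\mid m$, and in the nontrivial case observe that $n>\max A$ makes $n$ the maximum of $(D(m)\cap A)\cup\{n\}$ so that Corollary~\ref{3: remove max from practical set} applies. Your write-up is in fact slightly more careful than the paper's, explicitly disposing of the cases $A=\emptyset$ and $D(m)\cap A=\emptyset$.
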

\begin{proof}
Let $m$ be an $(A\cup\{n\})$-practical number. If $n\neq m$, then $m$ is trivially $A$-practical. Consider $m=an$ for $a\in\N$. Then $D(an)\cap(A\cup\{n\})$ is practical. Since $n>\max A$, we have $n=\max(D(an)\cap(A\cup\{n\}))$ and $D(an)\cap(A\cup\{n\})=(D(an)\cap A)\cup\{n\}$, so by the Observation \ref{3: remove max from practical set} it follows that $D(an)\cap A$ is practical, therefore $an$ is $A$-practical.
\end{proof}
It is also possible to remove elements smaller than the maximum of a set.
\begin{theorem}
    Let $A\subset\N$ be finite and $n\in\N$ be a practical number such that $S(n)\subset A$ and $n\notin A$. If $\sup(A\setminus S(n))\leq s(n)+1$, then 
    $$\forall_{a\in\N}:an\in\Pr(A).$$
    In particular $\Pr(A\cup\{n\})=\Pr(A)$.
\end{theorem}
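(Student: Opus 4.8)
The plan is to fix $a\in\N$ and show $D(an)\cap A$ is practical by exhibiting it as the practical set $S(n)$ enlarged by finitely many ``small'' elements, which are then absorbed one at a time. Two facts are immediate: every proper divisor of $n$ divides $an$ and, by hypothesis, lies in $A$, so $S(n)\subseteq D(an)\cap A$; and since $n$ is practical, $D(n)$ is practical, hence $S(n)=D(n)\setminus\{\max D(n)\}$ is practical by Corollary \ref{3: remove max from practical set}, with $S_{S(n)}=s(n)$. Put $C:=(D(an)\cap A)\setminus S(n)$, so $D(an)\cap A=S(n)\cup C$ is a disjoint union; note $n\notin C$ since $n\notin A$, and every element of $C$ lies in $A\setminus S(n)$, hence is at most $\sup(A\setminus S(n))\le s(n)+1$. (If $n=1$ the hypotheses force $A=\emptyset$ and everything is trivial, so I may assume $n\ge2$.)

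If $C=\emptyset$ we are done; otherwise enumerate $C=\{c_1<\dots<c_t\}$ and set $P_0:=S(n)$ and $P_j:=S(n)\cup\{c_1,\dots,c_j\}$. I would then show by induction on $j$ that each $P_j$ is practical: $P_0$ is, and if $P_j$ is practical then $S_{P_j}\ge S_{S(n)}=s(n)$, so $c_{j+1}\le s(n)+1\le S_{P_j}+1$, and the \nameref{3: extension lemma} gives that $P_{j+1}=P_j\cup\{c_{j+1}\}$ is practical. Hence $D(an)\cap A=P_t$ is practical, i.e.\ $an\in\Pr(A)$.

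For the ``in particular'' part I would prove both inclusions. If $m\in\Pr(A\cup\{n\})$: for $n\nmid m$ one has $D(m)\cap(A\cup\{n\})=D(m)\cap A$, so $m\in\Pr(A)$; for $n\mid m$, write $m=an$ and invoke the previous paragraph. Conversely, if $m\in\Pr(A)$, only $m=an$ is nontrivial, and $D(an)\cap(A\cup\{n\})=(D(an)\cap A)\cup\{n\}$, where $D(an)\cap A$ is practical with sum at least $S_{S(n)}=s(n)$. Since $n$ is practical, $n-1$ is a sum of distinct divisors of $n$, each of which is $<n$ and hence a proper divisor, so $n-1\le s(n)$; thus $n\le S_{D(an)\cap A}+1$ and the \nameref{3: extension lemma} shows $(D(an)\cap A)\cup\{n\}$ is practical, i.e.\ $m\in\Pr(A\cup\{n\})$.

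The only delicate point is the opening observation: the bound $\sup(A\setminus S(n))\le s(n)+1$ is exactly what lets $S(n)$ on its own absorb \emph{every} extra element of $A\setminus S(n)$, whether it is smaller or larger than $\max S(n)$; once this is seen, the rest is a routine iteration of the \nameref{3: extension lemma}.
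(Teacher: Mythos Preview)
Your proof is correct and follows essentially the same route as the paper: show $S(n)\subset D(an)\cap A$, note $S(n)$ is practical, bound every element of $(D(an)\cap A)\setminus S(n)$ by $s(n)+1$ via the hypothesis, and absorb them one at a time with the \nameref{3: extension lemma}. The only difference is in the inclusion $\Pr(A)\subset\Pr(A\cup\{n\})$: the paper invokes Theorem~\ref{4: expand theorem} (whose hypothesis $\sigma(n)\ge 2n-1$ holds because $n$ is practical), whereas you argue directly from $n-1\le s(n)\le S_{D(an)\cap A}$ and the \nameref{3: extension lemma}; the underlying inequality is the same, so this is a cosmetic rather than a substantive departure.
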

\begin{proof}
    Let $a\in\N$. We note that $S(n)\subset D(an)\cap A$ and since $n$ is practical we know that $S(n)$ is practical. Let
    $$(D(an)\cap A)\setminus S(n)=\{a_1,a_2,...,a_l\},$$
    where $a_i\leq s(n)+1$ for any $i=1,...,l$. From the mathematical induction with respect to $r$ and from the \nameref{3: extension lemma} it follows that $S(n)\cup\{a_1,...,a_r\}$ is practical. Therefore, for $r=l$, we get that $D(an)\cap A$ is practical, hence $an$ is $A$-practical. In particular $\Pr(A\cup\{n\})\subset\Pr(A)$ and from theorem \ref{4: expand theorem} it follows that $\Pr(A)\subset\Pr(A\cup\{n\})$, thus $\Pr(A)=\Pr(A\cup\{n\})$.
\end{proof}
\begin{que}
    In previous theorems we considered only finite sets $A\subset\N$. Is it possible to establish some results concerning the reduction of an infinite set $A$ to some finite set $A'$ with $\Pr(A)\subset \Pr(A')$?
\end{que}

\section{The mapping $\mathcal{PR}$}

\subsection{The image of $\mathcal{PR}$}
In this section we analyse different properties of the following mapping.
\begin{defi}
    Let $$\mathcal{PR}:\mathcal{P}(\N)\ni A\mapsto \Pr(A)\in\mathcal{P}(\N).$$
\end{defi}
We can notice that $\mathcal{PR}$ is neither surjective nor injective. For the former we know that $1$ is $A$-practical for any $A\subset\N$ and for the latter we have already seen examples of pairs of sets $A\neq B$ such that $\Pr(A)=\Pr(B)$ (see Example \ref{4: 1 2 3 example extended}).

It turns out that sets of type $A=D(2^n)$ and $A=\emptyset$ are the only ones for which every natural number is $A$-practical.
\begin{obs}\label{4: D(2^n)}
    Let $A\subset\N$. If all natural numbers are $A$-practical, then $A=\emptyset$ or there exists $n\in\N\cup\{0,\infty\}$ such that $A=D(2^n)$.
\end{obs}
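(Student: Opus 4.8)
The plan is to reduce, by means of Observation \ref{4: 1 and 2 characterization}, to the case $\{1,2\}\subseteq A$, and then to show first that every element of $A$ is a power of $2$ and finally that the powers of $2$ occurring in $A$ form an initial segment $1,2,4,\dots$, which forces $A=D(2^n)$ for some $n\in\N\cup\{0,\infty\}$. I would begin by peeling off the degenerate cases. If $1\notin A$, then $A$ must be empty: otherwise, choosing any $a\in A$ gives $a\in D(a)\cap A\neq\emptyset$, so by Observation \ref{4: 1 and 2 characterization}(1) $a$ is not $A$-practical, contradicting the hypothesis. So assume $1\in A$. If moreover $2\notin A$, then by Observation \ref{4: 1 and 2 characterization}(2) every $A$-practical number $n$ satisfies $D(n)\cap A=\{1\}$; since every $a\in A$ is $A$-practical and $a\in D(a)\cap A$, no element of $A$ can exceed $1$, so $A=\{1\}=D(2^0)$. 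Thus from now on we may assume $\{1,2\}\subseteq A$.

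Next I would prove that $A\subseteq\{2^k:k\in\N_0\}$. Suppose not, and let $m$ be the least element of $A$ that is not a power of $2$; write $m=2^ab$ with $b\geq 3$ odd. By minimality of $m$, every element of $D(m)\cap A$ that is smaller than $m$ is a power of $2$ dividing $m$, hence lies in $\{1,2,\dots,2^a\}$, so the elements of $D(m)\cap A$ other than $m$ sum to at most $\sigma(2^a)=2^{a+1}-1$. Since $2^{a+1}-1<3\cdot 2^a-1\leq m-1$, the largest element $m$ of the finite nonempty set $D(m)\cap A$ is strictly larger than one plus the sum of the remaining elements, so by the \nameref{3: practical sets characterization} this set is not practical and $m$ is not $A$-practical --- a contradiction. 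Hence $A$ consists solely of powers of $2$.

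Finally, with $\{1,2\}\subseteq A\subseteq\{2^k:k\in\N_0\}$: if every power of $2$ lies in $A$, then $A=D(2^\infty)$. Otherwise let $k$ be least with $2^k\notin A$; since $1=2^0$ and $2=2^1$ lie in $A$, we have $k\geq 2$. If no power of $2$ larger than $2^{k-1}$ lies in $A$, then $A=\{1,2,\dots,2^{k-1}\}=D(2^{k-1})$, as desired. If some larger power of $2$ does lie in $A$, let $2^j$ with $j>k$ be the smallest such; then $2^i\notin A$ for every $i$ with $k\leq i<j$, so $D(2^j)\cap A=\{1,2,\dots,2^{k-1},2^j\}$. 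Its elements below the maximum sum to $2^k-1<2^j-1$, so once more the \nameref{3: practical sets characterization} shows that $D(2^j)\cap A$ is not practical and $2^j$ is not $A$-practical, a contradiction. Hence $A=D(2^{k-1})$ in this remaining case as well, and the proof is complete.

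The bulk of the argument is routine case bookkeeping. The one point that requires an idea is to recognise that in each problematic configuration there is a single witness --- the minimal non-power-of-$2$ element $m$, respectively the smallest offending power $2^j$ --- whose set of admissible divisors has a top element too large for the gap condition of the \nameref{3: practical sets characterization} to hold. Beyond organising these cases, I do not expect a serious obstacle.
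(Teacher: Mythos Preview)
Your proof is correct and follows the same strategy as the paper's: pick a minimal offending element of $A$ and show via the \nameref{3: practical sets characterization} that its set of divisors lying in $A$ cannot be practical. You are in fact more careful than the paper, which only treats the case $A\setminus D(2^\infty)\neq\varnothing$ explicitly, whereas you also dispose of the degenerate cases $1\notin A$, $2\notin A$, and the situation $A\subseteq D(2^\infty)$ with a missing power of $2$.
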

\begin{proof}
    For the sake of contradiction, suppose that $A$ is not of the form $D(2^n)$ and it is not empty. Set $k:=\min (A\setminus D(2^\infty))$ and let $r=\nu_2(k)$. Then
    $$D(k)\cap A=A'\cup\{k\},\text{ where }A'\subset\{1,2,...,2^r\}.$$
    Since $k\geq 3\cdot2^r>2^r+1\geq S_{A'}$, it follows from the \nameref{3: practical sets characterization} that $D(k)\cap A$ is not practical, therefore $k$ is not $A-$practical.
\end{proof}
\begin{que}
    If $A\subset\N$, what is the cardinality of $\Pr(A)$?
\end{que}
 On one hand if $A\subset\N$ is finite, then there are infinitely many $A$-practical numbers, in particular numbers that are coprime with each element of $A$ are $A$-practical. On the other hand, there exist sets $A\subset\N$ such that there are only finitely many $A$-practical numbers.

\begin{theorem}
    For all $k\in\N$ there exists $A_k\subset\N$ such that $\{1,...,k-1\}\subset A_k$, $k\notin A_k$ and both $\N\setminus A_k$ and $\Pr(A_k)$ are finite.
\end{theorem}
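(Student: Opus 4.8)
The plan is to take $A_k$ cofinite, obtained by deleting from $\N$ a single block of consecutive integers starting at $k$. Set $b_k:=\binom{k}{2}+1=\tfrac{k(k-1)}{2}+1$ and define $A_k:=\N\setminus\{k,k+1,\dots,b_k\}$. The bookkeeping requirements are immediate: one checks $b_k\ge k$ for every $k\in\N$ (indeed $b_k-k=\tfrac{(k-1)(k-2)}{2}\ge 0$), so $k\in\N\setminus A_k$, while $\{1,\dots,k-1\}\cap\{k,\dots,b_k\}=\emptyset$ gives $\{1,\dots,k-1\}\subset A_k$; and $\N\setminus A_k=\{k,\dots,b_k\}$ is obviously finite. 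So the whole content is the claim $\Pr(A_k)\subseteq\{1,\dots,b_k\}$, which yields finiteness of $\Pr(A_k)$.

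To prove that claim I would fix an arbitrary $n>b_k$ and show that $D(n)\cap A_k$ is not practical. The divisors of $n$ lying in $A_k$ are exactly the divisors of $n$ that are $\le k-1$ together with those that are $\ge b_k+1$, since everything in the range $[k,b_k]$ has been removed. The first group is a set of distinct positive integers contained in $\{1,\dots,k-1\}$, so its sum $s$ satisfies $s\le 1+2+\dots+(k-1)=\binom{k}{2}=b_k-1$. On the other hand $n$ is itself a divisor of $n$ with $n\ge b_k+1$, hence $n\in D(n)\cap A_k$, so this set does contain elements $\ge b_k+1$; let $d'$ be the least of them. Writing the elements of $D(n)\cap A_k$ in increasing order, $d'$ is the first term exceeding $k-1$, and the sum of all the terms before it equals $s$. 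Since $d'\ge b_k+1>b_k\ge s+1$, the inequality required by the \nameref{3: practical sets characterization} fails precisely at this step, so $D(n)\cap A_k$ is not practical; that is, $n\notin\Pr(A_k)$. Hence $\Pr(A_k)\subseteq\{1,\dots,b_k\}$, as wanted.

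A one-line remark handles the degenerate case $k=1$ (and, painlessly, $k=2$): there the ``small'' block of divisors of $n$ is empty, $A_1=\N\setminus\{1\}$, and for any $n\ge 2$ we have $1\notin D(n)\cap A_1$, so $D(n)\cap A_1$ already fails the hypothesis $a_1=1$ of the Characterization Theorem and is not practical. I do not anticipate a real obstacle here: the only delicate point is the calibration of the threshold $b_k$, which is forced by the crude estimate $s\le\binom{k}{2}$ on the sum of the divisors of $n$ lying below $k$. That estimate is exactly strong enough to guarantee an unbridgeable gap in $D(n)\cap A_k$ between the surviving small divisors and the surviving large ones, and nothing finer is needed.
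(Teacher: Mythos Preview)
Your construction $A_k=\N\setminus\{k,\dots,b_k\}$ with $b_k=\tfrac{k(k-1)}{2}+1$ is exactly the one the paper uses, and your argument is the same gap argument: for $n>b_k$ the small divisors sum to at most $\binom{k}{2}=b_k-1$ while some divisor (namely $n$ itself) survives above $b_k$, so the Characterization Theorem fails. The proof is correct and essentially identical to the paper's, only slightly more explicit in invoking the Characterization Theorem and in treating $k=1$ separately.
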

\begin{proof}
    We set $A_k$ such that $\N\setminus A_k=\left\{k,...,\frac{k(k-1)}{2}+1\right\}$. If $m>\frac{k(k-1)}{2}+1$, then $m\in A_k$, hence 
    $$D(m)\cap A_k=A_1\cup A_2,$$
    where $A_1\subset \{1,2,...,k-1\}$ and $S_{A_1}\leq \frac{k(k-1)}{2}$, but also $A_2\neq \emptyset$ and $S_{A_2}>\frac{k(k-1)}{2}+1$.
    Therefore $D(m)\cap A_k$ cannot be practical, so $m$ cannot be $A_k$-practical; hence $\Pr(A_k)$ is finite.
\end{proof}
We can use the above construction to find sets $A\subset \N$ such that $\Pr(A)$ is finite but $\N\setminus A$ is infinite.
\begin{theorem}
    For all $k\in\N$ there exists $A_k\subset\N$ such that $\{1,...,k-1\}\subset A_k$, $k\notin A_k$, $\N\setminus A_k$ is infinite and $\Pr(A_k)$ is finite.
\end{theorem}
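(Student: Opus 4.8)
The plan is to thin out, near infinity, the set produced in the preceding theorem while keeping intact exactly the feature that makes it work. Write $c:=\frac{k(k-1)}{2}$ and, instead of deleting from $\N$ only the block $\{k,\dots ,c+1\}$, delete in addition the infinite set of multiples $\{\,j(c+2):j\ge 2\,\}$ of the single number $c+2$; that is, set
$$\N\setminus A_k:=\{k,k+1,\dots ,c+1\}\cup\{\,j(c+2):j\ge 2\,\}.$$
Three of the four required properties are then immediate: every deleted number is $\ge k$, so $\{1,\dots ,k-1\}\subseteq A_k$; one checks $(k-1)(k-2)\ge 0$, i.e. $k\le c+1$, so $k$ lies in the first block and $k\notin A_k$; and $\N\setminus A_k$ contains $\{2(c+2),3(c+2),\dots\}$, hence is infinite. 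It remains to prove that $\Pr(A_k)$ is finite, and in fact I would show $\Pr(A_k)\subseteq\{1,\dots ,c+1\}$.

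To that end I would fix $m>c+1$ and analyse $D(m)\cap A_k$, partitioning it as $A_1\cup A_2$, where $A_1:=\{d\mid m:d\le k-1\}$ collects the small divisors and $A_2$ collects the divisors of $m$ that lie in $A_k$ and exceed $k-1$. Two bounds fall out at once: since $A_1\subseteq\{1,\dots ,k-1\}$ we get $S_{A_1}\le 1+2+\dots +(k-1)=c$, and since $A_k$ contains no integer of the interval $[k,c+1]$, every element of $A_2$ is at least $c+2$. The one point that needs an argument is that $A_2\neq\emptyset$: as $m\ge c+2$ and $m\mid m$, either $m\in A_k$, in which case $m\in A_2$, or else $m=j(c+2)$ for some $j\ge 2$, in which case $c+2\mid m$ and $c+2\in A_k$ — it is neither in the block $\{k,\dots ,c+1\}$ nor of the form $j(c+2)$ with $j\ge 2$ — so $c+2\in A_2$. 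Consequently, writing $D(m)\cap A_k$ in increasing order, the first term exceeding $k-1$ equals $\min A_2\ge c+2>c+1\ge S_{A_1}+1$, which violates the inequality in the \nameref{3: practical sets characterization} (and when $A_1=\emptyset$, namely for $k=1$, it also violates the requirement that the least element be $1$). Hence $D(m)\cap A_k$ is not practical, so $m\notin\Pr(A_k)$, and the theorem follows.

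The main — essentially the only — obstacle is choosing the deleted progression so that it is both infinite and never reintroduces practicality for the very numbers it removes. This is exactly why the deletions are taken to be the multiples $j(c+2)$, $j\ge 2$, of the one survivor $c+2$: every removed large number still retains $c+2$ as a divisor lying in $A_k$, so the ``gap'' between the divisors $\le k-1$ (whose sum is at most $c$) and the divisors $\ge c+2$ is never closed. Once this is arranged, the remainder is the same bookkeeping — the estimate $S_{A_1}\le c$ together with a single application of the \nameref{3: practical sets characterization} — as in the preceding theorem.
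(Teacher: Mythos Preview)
Your proof is correct and takes essentially the same approach as the paper: with $c=\frac{k(k-1)}{2}$, the paper also sets $\N\setminus A_k=\{k,\dots ,c+1\}\cup\{n(c+2):n\ge 2\}$ (written there as $\frac{nk(k-1)}{2}+2n$) and argues that every $m>c+1$ has a divisor in $A_k$ that is at least $c+2$, creating a gap after $A_1\subseteq\{1,\dots ,k-1\}$. Your write-up is in fact more careful than the paper's, explicitly verifying $k\le c+1$, $c+2\in A_k$, and the edge case $k=1$.
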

\begin{proof}
    We set $A_k$ such that $\N\setminus A_k=\left\{k,...,\frac{k(k-1)}{2}+1\right\}\cup\left\{\frac{nk(k-1)}{2}+2n:n\geq 2\right\}$. If $m>\frac{k(k-1)}{2}+1$, then $m\in A_k$ or $\frac{k(k-1)}{2}+2\mid m$, hence 
    $$D(m)\cap A_k=A_1\cup A_2,$$
    where $A_1\subset \{1,2,...,k-1\}$ and $S_{A_1}\leq \frac{k(k-1)}{2}$, but also $A_2\neq \emptyset$ and $S_{A_2}>\frac{k(k-1)}{2}+1$.
    Therefore $D(m)\cap A_k$ cannot be practical, so $m$ cannot be $A_k$-practical; hence $\Pr(A_k)$ is finite.
\end{proof}
\begin{que}
    Can we find a necessary and sufficient condition for $B\subset \N$ to be an element of $\text{Im}(\mathcal{PR})$?
\end{que}
\begin{obs}
    If $B\in\text{Im}(\mathcal{PR})$ and $p$ is a prime number such that $p^m\in B$ for some $m\in\N$, then $p^k\in B$ for all $0\leq k<m$.
\end{obs}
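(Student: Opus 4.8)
The plan is to unwind the definitions and reduce everything to the fact that an initial segment of a finite practical set is practical. Since $B\in\operatorname{Im}(\mathcal{PR})$, fix $A\subset\N$ with $B=\Pr(A)$. The hypothesis $p^m\in B$ means precisely that $D(p^m)\cap A$ is a practical set; note that it is finite, being a subset of $D(p^m)=\{1,p,p^2,\ldots,p^m\}$. To conclude $p^k\in B$ for $0\le k<m$ it suffices to show that $D(p^k)\cap A$ is practical.

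The key point is that $D(p^k)=\{1,p,\ldots,p^k\}\subseteq D(p^m)$, so
$$D(p^k)\cap A=\{d\in D(p^m)\cap A: d\le p^k\}.$$
Every element of $D(p^m)\cap A$ is a power of $p$, hence ordering these elements by magnitude is the same as ordering them by exponent. Consequently, if we write $D(p^m)\cap A=\{a_1<a_2<\cdots<a_s\}$, then $D(p^k)\cap A=\{a_1,\ldots,a_t\}$ for the largest index $t$ with $a_t\le p^k$ (with $t=0$, i.e.\ the empty set, if no such index exists). In other words, $D(p^k)\cap A$ is an initial segment of the increasing enumeration of $D(p^m)\cap A$.

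It remains to observe that an initial segment of a finite practical set is practical. This follows either by applying Corollary \ref{3: remove max from practical set} repeatedly, stripping off the maximum element $s-t$ times, or directly from the \nameref{3: practical sets characterization}: if $\{a_1<\cdots<a_s\}$ is practical then $a_1=1$ and $a_{i+1}\le\sum_{j=1}^{i}a_j+1$ for all $i$, and these conditions for the truncated family $\{a_1<\cdots<a_t\}$ form a subfamily of the original inequalities, with $a_1=1$ still holding; hence $\{a_1,\ldots,a_t\}$ is practical. Applying this with the practical set $D(p^m)\cap A$ and its initial segment $D(p^k)\cap A$ shows the latter is practical, so $p^k\in\Pr(A)=B$.

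I do not expect a genuine obstacle here; the only points requiring a little care are the degenerate case $D(p^m)\cap A=\emptyset$ (then each $D(p^k)\cap A$ is empty as well, hence practical) and the remark that it is precisely the fact that all divisors in question are powers of the single prime $p$ which makes "the elements not exceeding $p^k$" coincide with "an initial segment."
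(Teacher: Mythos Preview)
Your proof is correct; the paper states this observation without proof, so there is no argument to compare against. Your reduction---that $D(p^k)\cap A$ is an initial segment of the finite practical set $D(p^m)\cap A$, hence practical by repeated application of Corollary~\ref{3: remove max from practical set} (or directly from the \nameref{3: practical sets characterization})---is exactly the natural verification the authors are leaving to the reader.
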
 
\begin{theorem}
    Let $f\in \Z[x]$ and $B\subset\N$ be of the form $B=f(\Z)\cap\N$ or $B=f(\N)\cap\N$. Assume that $B$ is infinite. Then $B\in\text{Im}(\mathcal{PR})$ if and only if $B=\N$ or $B=\{2n-1: n\in\N\}$.
\end{theorem}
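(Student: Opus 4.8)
The plan is to analyze which polynomial images can be in $\text{Im}(\mathcal{PR})$ by combining two structural constraints on members of $\text{Im}(\mathcal{PR})$: the prime-power closure property from the preceding observation, and the behaviour of $\Pr(A)$ on primes from Observation \ref{4: prime observation}. First I would settle the easy direction: if $B = \N$ then $B = \Pr(D(2^\infty))$ by the corollary following Theorem \ref{4: D(n) theorem}, and if $B = \{2n-1 : n\in\N\}$ I would exhibit an explicit $A$ — the natural candidate is $A = \{2^k : k\ge 1\}$ or some set of even numbers, chosen so that $n$ is $A$-practical precisely when $D(n)\cap A = \emptyset$, i.e. when $n$ is odd; by Observation \ref{4: 1 and 2 characterization}(1) any $A$ with $1\notin A$ consisting of even numbers whose only common divisor structure forces $D(n)\cap A=\emptyset \iff n$ odd will do, e.g. $A = 2\N$ gives $\Pr(2\N) = \{$odd numbers$\}$.

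For the hard direction, suppose $B = f(\Z)\cap\N$ (or $f(\N)\cap\N$) is infinite and lies in $\text{Im}(\mathcal{PR})$, say $B = \Pr(A)$. The key dichotomy is whether $B$ contains infinitely many primes or not. If $B$ contains a prime $p\ge 3$, then by Observation \ref{4: prime observation} $p\notin A$; and since $B = \Pr(A)$ contains all $n$ coprime to every element of $A$ when... — more carefully, I would argue: since $f$ takes infinitely many values, standard facts about polynomials force the value set to be "dense" in the sense that it cannot avoid a whole congruence class unless $f$ itself is constrained. The cleanest route is: a polynomial $f$ with $|f(\Z)\cap\N|=\infty$ and $f(\Z)\cap\N \subsetneq \N$ must (after the observation forces prime-power closure) either miss some prime power $p^m$ while containing a smaller power, contradicting closure, or its image is a full arithmetic-progression-like set. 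I would show that the only infinite polynomial value sets closed under "if $p^m\in B$ then $p^k\in B$ for $k<m$" and realizable as $\Pr(A)$ are $\N$ and the odd numbers: if $\deg f\ge 2$ then $f$ misses arbitrarily long runs of integers, so $B$ misses some $2^m$ while (for large enough representatives) containing $2,4,\dots,2^{m-1}$ — one must check $B$ actually contains these small powers, which follows because $\Pr(A)$ always contains $1$ and all prime powers $2^j$ unless $2^j\in A$, and if cofinitely many $2^j\in A$ then $\Pr(A)$ is essentially the practical numbers near those, a contradiction with being a polynomial image. If $\deg f = 1$, say $f(x) = ax+b$, then $B$ is (eventually) an arithmetic progression $\{an+c\}$; the prime-power closure then forces $a\in\{1,2\}$ and the progression to be $\N$ or the odds.

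The main obstacle I anticipate is the $\deg f \ge 2$ case: ruling out sparse polynomial value sets. The observation about prime-power closure is the right lever — a set of polynomial values grows like $n^{\deg f}$ so has density zero for $\deg f\ge 2$, hence cannot contain all powers of $2$ up to its gaps, yet $\Pr(A)$ is forced to contain small powers of $2$ unless $A$ swallows them, and a set $A$ containing cofinitely many powers of $2$ produces a $\Pr(A)$ that is, roughly, a shifted copy of the practical numbers in a neighbourhood — which is known not to be a polynomial value set (e.g. by counting: practical numbers have density $\asymp x/\log x$, incompatible with $x^{1/\deg f}$ for the sparse side and with arithmetic progressions otherwise). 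I would make this rigorous by a density/counting argument: $\#(\Pr(A)\cap[1,x])$ versus $\#(B\cap[1,x]) \asymp x^{1/\deg f}$ forces $\deg f = 1$, and then the linear analysis above finishes the proof. The linear case itself requires care to pin down $b$ (the constant term) using that $1\in\Pr(A)$ always, so $1\in B$, forcing the progression to start correctly.
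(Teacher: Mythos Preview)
Your easy direction is fine (the paper uses $\Pr(\varnothing)=\N$ and $\Pr(\{2\})=\{2n-1:n\in\N\}$, but your choices work too), and your plan for $\deg f=1$ is essentially the paper's: once $1\in B$ forces $b=1$, for $a\geq 3$ one takes an odd prime $p_0\equiv -1\pmod a$, so $p_0^2\in B$ while $p_0\notin B$, whence $p_0\in A$ and $p_0^2$ fails to be $A$-practical. So far so good.

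The genuine gap is the case $\deg f\geq 2$. Your proposed density argument does not go through: you would need a lower bound of the form $\#(\Pr(A)\cap[1,x])\gg x^{1/2+\varepsilon}$ valid for \emph{every} $A$ with infinite $\Pr(A)$, but no such bound is available --- the paper itself constructs sets $A$ with $\Pr(A)$ finite, and nothing rules out $\Pr(A)$ being arbitrarily sparse when infinite. Your auxiliary claim that ``$\Pr(A)$ always contains all prime powers $2^j$ unless $2^j\in A$'' is also false (take $A=\{4\}$: then $8\notin A$ yet $D(8)\cap A=\{4\}$ is not practical, so $8\notin\Pr(A)$), so the powers-of-$2$ line collapses as well. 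What the paper actually does is sharper and avoids density entirely: by Frobenius density there are $\gg x/\log x$ primes $p\leq x$ dividing some value of $f$, while only $O(x^{1/\deg f})$ primes can themselves lie in $B$; hence there is an odd prime $p_0\notin B$ with $p_0\mid f(m)$ for some $m$. Then $p_0\in A$ (since $p_0\notin\Pr(A)$), and by the Chinese remainder theorem one finds an \emph{odd} $n\in B$ divisible by $p_0$. For this $n$ one has $p_0\in D(n)\cap A$ but $2\notin D(n)\cap A$, so $D(n)\cap A$ cannot be practical --- contradicting $n\in\Pr(A)$. This is the missing idea you need to replace the density sketch.
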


\begin{proof}
    Assume that there exists $A\subset\N$ such that $B=\text{Pr}(A)$.
    
    Start with the case of $\deg f\geq 2$. Since $B\in\text{Im}(\mathcal{PR})$, we have $1\in f(\Z)$. Let $p_0$ be the least odd prime number such that $p_0\not\in B$ but $ap_0\in B$ for some $a\in\N$. Such a number exists as there are $O\left(x^{\frac{1}{\deg f}}\right)$ positive integers at most equal to $x$ represented as values of $f$ at integer arguments and there are $\Omega\left(\frac{x}{\log x}\right)$ prime numbers $p\leq x$ such that $p\mid f(m)$ for some $m\in\N$ (by Frobenius density theorem). Since $p_0\not\in B$, there must be $p_0\in A$. Let $n\in B$ be an odd positive integer (such an $n$ exists because of existence of $m_1,m_2\in\N$ such that $2\nmid f(m_1)$ and $p_0\mid f(m_2)$ and Chinese remainder theorem). Then $p_0\in D(n)\cap A$ and $2\not\in D(n)\cap A$, so $S_{D(n)\cap A}\geq p_0>2$ but $2$ cannot be written as sum of some elements of from $D(n)\cap A$. This means that $n\not\in\text{Pr}(A)=B$, which is a contradiction.

    We are left with the case of $\deg f=1$. Then $B=\{an+b: n\in\N_0\}$ for some $a,b\in\N$. Because $1\in B$, we have $b=1$. Let $a\geq 3$. By Dirichlet theorem on prime numbers in arithmetic progressions there exists an odd prime number $p_0\equiv -1\pmod{a}$. Then $p_0^2\equiv 1\pmod{a}$ so $p_0^2\in B$. On the other hand, as $p_0\not\in B$ and $p_0\neq 2$, there must be $p_0\in A$. Thus $S_{D(p_0^2)\cap A}\geq p_0>2$ and $2$ cannot be written as sum of some elements of from $D(n)\cap A$. This means that $n\not\in\text{Pr}(A)=B$, which is a contradiction.

    In order to finish the proof of the theorem, we note that $\N=\text{Pr}(\varnothing)$ and $\{2n-1: n\in\N\}=\text{Pr}(\{2\})$.
\end{proof}
Similarly to \ref{4: D(2^n)} one can ask whether there is any strict subset $A\subsetneq\N$ such that $\Pr(A)$ is the set of all practical numbers. Our hypothesis is that there is no such strict subset.
\begin{hyp}
    If $A\subset\N$ such that $\Pr(A)=\Pr(\N)$, then $A=\N$.
\end{hyp}

\subsection{Continuity of $\mathcal{PR}$}

Remember that the membership of a positive integer $n$ to the set $\text{Pr}(A)$ is equivalent to the practicality of the set $A\cap D(n)\subset A\cap\{1,\ldots ,n\}$. Thus, we have the following fact.

\begin{obs}\label{4: independence on greater elements}
    Let $A,B\subset\N$ be such that $A\cap\{1,\ldots ,n\}=B\cap\{1,\ldots ,n\}$. Then $k\in\text{Pr}(A)\Longleftrightarrow k\in\text{Pr}(B)$ for each $k\in\{1,\ldots ,n\}$. In other words $$\text{Pr}(A)\cap\{1,\ldots ,n\}=\text{Pr}(B)\cap\{1,\ldots ,n\}.$$
\end{obs}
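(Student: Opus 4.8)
The plan is to reduce each individual membership question $k\in\text{Pr}(A)$ to a condition that refers only to the portion of $A$ lying in $\{1,\ldots ,n\}$, after which the hypothesis makes the two relevant sets coincide verbatim.

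First I would unwind the definition: for $k\in\N$ we have $k\in\text{Pr}(A)$ precisely when $D(k)\cap A$ is practical, where $D(k)$ is the set of positive divisors of $k$. The key elementary observation is that if $k\le n$, then every divisor of $k$ is at most $k\le n$, so $D(k)\subseteq\{1,\ldots ,n\}$.

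Using this, for $k\in\{1,\ldots ,n\}$ I would compute
\[
D(k)\cap A=D(k)\cap\bigl(A\cap\{1,\ldots ,n\}\bigr)=D(k)\cap\bigl(B\cap\{1,\ldots ,n\}\bigr)=D(k)\cap B,
\]
the first and last equalities because $D(k)\subseteq\{1,\ldots ,n\}$, and the middle one by the hypothesis $A\cap\{1,\ldots ,n\}=B\cap\{1,\ldots ,n\}$. Hence $D(k)\cap A$ and $D(k)\cap B$ are literally the same set, so one is practical if and only if the other is; that is, $k\in\text{Pr}(A)\Longleftrightarrow k\in\text{Pr}(B)$. Ranging over all $k\in\{1,\ldots ,n\}$ then yields the displayed equality $\text{Pr}(A)\cap\{1,\ldots ,n\}=\text{Pr}(B)\cap\{1,\ldots ,n\}$.

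There is no real obstacle here: the statement is essentially immediate once one notes that divisors of $k$ never exceed $k$. The only point deserving a moment's care is the boundary value $k=n$, where a divisor of $k$ may equal $n$ itself; but this still lies in $\{1,\ldots ,n\}$, so the inclusion $D(k)\subseteq\{1,\ldots ,n\}$, and with it the whole argument, remains valid.
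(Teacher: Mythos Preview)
Your proof is correct and follows exactly the same idea as the paper, which merely remarks just before the observation that $k\in\text{Pr}(A)$ depends only on $A\cap D(k)\subset A\cap\{1,\ldots,n\}$ and states the result without further argument. Your write-up is simply a more detailed unpacking of that one-line justification.
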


The power set $\mathcal{P}(\N)$ can be identified with Cartesian product $\{0,1\}^\N$ of countably many copies of a discrete set $\{0,1\}$. Hence, we may equip $\mathcal{P}(\N)$ in the product topology. This topology is metrizable. One of possible metrics inducing this topology is the one given by the formula
$$d(A,B)=(\inf(A\div B))^{-1},\, A,B\in\mathcal{P}(\N),$$
where $A\div B$ denotes the symmetric difference of the sets $A$ and $B$ and we set the conventions that $\inf\varnothing =+\infty$ and $\frac{1}{+\infty}=0$. Since $A\cap\{1,\ldots ,n\}=B\cap\{1,\ldots ,n\}$ if and only if $d(A,B)<\frac{1}{n}$, we can reformulate Observation \ref{4: independence on greater elements} in the language of metric spaces.

\begin{cor}
    Let $A,B\subset\N$. Then $$d(\text{Pr}(A),\text{Pr}(B))\leq d(A,B).$$ In other words, the mapping $\mathcal{PR}$ is Lipschitzian with Lipschitz constant $1$ with respect to the metric $d$.
\end{cor}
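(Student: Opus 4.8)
The plan is to derive the inequality directly from Observation~\ref{4: independence on greater elements} together with the stated description of the metric $d$, so that essentially no new computation is required. First I would dispose of the trivial case: if $A=B$, then $\text{Pr}(A)=\text{Pr}(B)$ and both sides of the claimed inequality vanish. So assume $A\neq B$ and put $m:=\inf(A\div B)$, which is then a well-defined positive integer, so that $d(A,B)=m^{-1}$.

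The main step is the remark that $m$ is, by its very definition, the least positive integer in which $A$ and $B$ differ, hence $A\cap\{1,\ldots ,m-1\}=B\cap\{1,\ldots ,m-1\}$. Applying Observation~\ref{4: independence on greater elements} with $n=m-1$ yields $\text{Pr}(A)\cap\{1,\ldots ,m-1\}=\text{Pr}(B)\cap\{1,\ldots ,m-1\}$, which means that every element of the symmetric difference $\text{Pr}(A)\div\text{Pr}(B)$ is $\geq m$. Therefore $\inf\bigl(\text{Pr}(A)\div\text{Pr}(B)\bigr)\geq m$ — and this also covers the subcase $\text{Pr}(A)=\text{Pr}(B)$, where the infimum is $+\infty$ by convention — so taking reciprocals gives
$$d\bigl(\text{Pr}(A),\text{Pr}(B)\bigr)=\bigl(\inf(\text{Pr}(A)\div\text{Pr}(B))\bigr)^{-1}\leq m^{-1}=d(A,B).$$
The closing assertion, that $\mathcal{PR}$ is Lipschitzian with constant $1$, is then merely a restatement of this bound.

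I do not anticipate a genuine obstacle: the substance has already been established in Observation~\ref{4: independence on greater elements}, and the only point requiring care is the bookkeeping with the conventions $\inf\varnothing=+\infty$ and $(+\infty)^{-1}=0$, so that the argument stays valid when one or both symmetric differences are empty. Alternatively one can phrase everything purely metrically: using the stated equivalence ``$X\cap\{1,\ldots ,n\}=Y\cap\{1,\ldots ,n\}\iff d(X,Y)<\tfrac1n$'', from $d(A,B)<\tfrac1n$ one gets agreement of $A$ and $B$ on $\{1,\ldots ,n\}$, hence of $\text{Pr}(A)$ and $\text{Pr}(B)$ by the Observation, hence $d(\text{Pr}(A),\text{Pr}(B))<\tfrac1n$; since $d(\text{Pr}(A),\text{Pr}(B))$ lies in $\{0\}\cup\{\tfrac1k:k\in\N\}$, letting $n$ run over all positive integers with $\tfrac1n>d(A,B)$ forces $d(\text{Pr}(A),\text{Pr}(B))\leq d(A,B)$.
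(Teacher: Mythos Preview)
Your proof is correct and follows essentially the same approach as the paper: the paper presents the corollary as a direct reformulation of Observation~\ref{4: independence on greater elements} via the equivalence ``$A\cap\{1,\ldots ,n\}=B\cap\{1,\ldots ,n\}\iff d(A,B)<\tfrac1n$'', which is precisely your alternative phrasing, while your primary argument with $m=\inf(A\div B)$ is just an explicit unwinding of the same idea.
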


Actually, $1$ is the optimal Lipschitz constant. Indeed, let $N\geq 2$ be a positive integer and $A=\varnothing$, $B=\{N\}$. Then $\text{Pr}(A)=\N$ and $\text{Pr}(B)=\{qN+r: q\in\N_0, r\in\{1,\ldots ,N-1\}\}$. Consequently, $d(\text{Pr}(A),\text{Pr}(B))=d(A,B)=\frac{1}{N}$.

Moreover, there is no metric on $\mathcal{P}(\N)$ such that $\mathcal{PR}$ is Lipschitzian with Lipschitz constant less than $1$ with respect to this metric. Existence of such a metric would imply existence of a (unique) fixed point of the mapping $\mathcal{PR}$ by Banach fixed point theorem as $\mathcal{P}(\N)$, being a compact topological space, is complete with respect to any metric inducing the topology on $\mathcal{P}(\N)$. However, $\mathcal{PR}$ has not a fixed point as $p\in A\div\text{Pr}(A)$ for each odd prime number $p$ and subset $A$ of $\N$. 

Let $(A_j)_{j\in\N}$ be a convergent sequence of subsets of $\N$ with respect to the product topology on $\mathcal{P}(\N)$. This means that for each $n\in\N$ there exists a $k=k(n)\in\N$ such that $n\in A_j$ for all $j\geq k$ or $n\not\in A_j$ for all $j\geq k$. As a consequence,
$$\N=\left(\bigcup_{k=1}^\infty\bigcap_{j=k}^\infty A_j\right)\cup\left(\bigcup_{k=1}^\infty\bigcap_{j=k}^\infty\N\backslash A_j\right)=(\liminf_{j\to\infty}A_j)\cup (\liminf_{j\to\infty}\N\backslash A_j),$$
from which we easily conclude that $\liminf_{j\to\infty}A_j=\limsup_{j\to\infty}A_j$. Thus, the convergence of the sequence $(A_j)_{j\in\N}$ coincides with the set-theoretic definition of the limit of this sequence. Since the mapping $\mathcal{PR}$ is continuous, we know that if $(A_j)_{j\in\N}$ is a convergent sequence of subsets of $\N$, then
$$\text{Pr}(\lim_{j\to\infty}A_j)=\lim_{j\to\infty}\text{Pr}(A_j).$$
In particular,
$$\text{Pr}\left(\bigcup_{j=1}^\infty A_j\right)=\lim_{j\to\infty}\text{Pr}(A_j)$$
for an increasing sequence $(A_j)_{j\in\N}$ (with respect to inclusion) and
$$\text{Pr}\left(\bigcap_{j=1}^\infty A_j\right)=\lim_{j\to\infty}\text{Pr}(A_j)$$
for a decreasing sequence $(A_j)_{j\in\N}$. This agrees with the fact contained in the proof of Theorem \ref{4: minimal maximal theorem} that 
        \begin{equation*}        \Pr\left(\bigcup_{C\in\mathcal{C}}C\right)=\bigcup_{C\in\mathcal{C}}\Pr(C) 
        \end{equation*}
for a chain $\mathcal{C}$ of subsets of $\N$ with respect to the partial order $\prec$. Recall that
$$A\prec A'\Longleftrightarrow A\subseteq A'\text{ and }\Pr(A)\subseteq \Pr(A').$$

\subsection{Periodic points of $\mathcal{PR}$}
\begin{que}
    Does there exist a subset of the set of natural numbers that is a periodic point of the mapping $\mathcal{PR}$? In other words does there exist a set $A\subset\N$ with the property that $A=\Pr^k(A)$ for some $k\in\N$?
\end{que}
It turns out that the answer to the above is positive. Moreover the only possible period of such periodic point is 2. We state and prove it in the following section.

We start with stating a simple fact that will be crucial for proving a following result.
\begin{obs}\label{4: S(n) characterization of A-practicality}
    Let $A\subset\N$ and $n\in\N$. 
    \begin{enumerate}
        \item If $S(n)\cap A$ is practical and $S_{S(n)\cap A}\geq n-1$, then $n\in\Pr(A)$.
        \item If $S(n)\cap A$ is practical and $S_{S(n)\cap A}< n-1$, then $n\in A\div \Pr(A)$.
        \item If $S(n)\cap A$ is not practical, then $n\notin \Pr(A)$.
    \end{enumerate}
\end{obs}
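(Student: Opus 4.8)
The plan is to reduce each of the three statements to a short case analysis according to whether $n\in A$ or $n\notin A$. The crucial elementary facts are that $n\in\Pr(A)$ means exactly that $D(n)\cap A$ is practical, that $D(n)\cap A=S(n)\cap A$ when $n\notin A$ while $D(n)\cap A=(S(n)\cap A)\cup\{n\}$ when $n\in A$, and that in the latter case $n=\max(D(n)\cap A)$ since every proper divisor of $n$ is strictly smaller than $n$. Note also that $S(n)\cap A$ is always a finite set, so the finite versions of the First Extension Lemma and of Corollary \ref{3: remove max from practical set} apply without any fuss.

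For part (1): if $n\notin A$ then $D(n)\cap A=S(n)\cap A$ is practical by hypothesis, so $n\in\Pr(A)$. If $n\in A$, then $S(n)\cap A$ is practical and the hypothesis $S_{S(n)\cap A}\geq n-1$ says $n\leq S_{S(n)\cap A}+1$; hence by the \nameref{3: extension lemma} the set $(S(n)\cap A)\cup\{n\}=D(n)\cap A$ is practical, so again $n\in\Pr(A)$. For part (2), recall that $n\in A\div\Pr(A)$ means exactly one of the memberships $n\in A$, $n\in\Pr(A)$ holds. If $n\notin A$, then as above $D(n)\cap A=S(n)\cap A$ is practical, so $n\in\Pr(A)$ and we are done. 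If $n\in A$, then by the \nameref{3: extension lemma} the set $(S(n)\cap A)\cup\{n\}$ is practical if and only if $n\leq S_{S(n)\cap A}+1$; the hypothesis $S_{S(n)\cap A}<n-1$ rules this out, so $D(n)\cap A$ is not practical, $n\notin\Pr(A)$, and exactly one membership holds.

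For part (3): if $n\notin A$ then $D(n)\cap A=S(n)\cap A$ is not practical, so $n\notin\Pr(A)$. If $n\in A$, then $n=\max(D(n)\cap A)$, so were $D(n)\cap A$ practical, Corollary \ref{3: remove max from practical set} would force $D(n)\cap A\setminus\{n\}=S(n)\cap A$ to be practical, contradicting the hypothesis; hence $D(n)\cap A$ is not practical and $n\notin\Pr(A)$. There is no serious obstacle here; the only point requiring a little care is that in the $n\in A$ branches one must invoke the right structural lemma — the \nameref{3: extension lemma} for parts (1) and (2) and the "remove the maximum" corollary for part (3) — and keep in mind that $n\notin S(n)\cap A$, so adding $n$ genuinely enlarges the set.
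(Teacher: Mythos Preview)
Your argument is correct. The paper states this observation without proof, treating it as an immediate consequence of the definitions and the machinery from Section~\ref{sec3: practical sets}; your write-up is precisely the natural verification one would supply, splitting on $n\in A$ versus $n\notin A$ and invoking the \nameref{3: extension lemma} and Corollary~\ref{3: remove max from practical set} exactly where they are needed.
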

\begin{theorem}
    Let $D\in\N$ and $A_1,A_2,\ldots,A_D\subset\N$ be such that $A_{1}=\text{Pr}(A_D)$ and $A_{j+1}=\text{Pr}(A_j)$ for each $j\in\{1,\ldots ,D-1\}$. Then $D=2d$ for some $d\in\N$ and $A_{2j-1}=A_1$, $A_{2j}=A_2$ for each $j\in\{1,\ldots ,d\}$.
\end{theorem}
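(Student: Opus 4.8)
The plan is to show that every point on a periodic orbit of $\mathcal{PR}$ is already a periodic point whose period divides $2$, and that $\mathcal{PR}$ has no fixed point; together these force the stated structure. Throughout, read the index $j$ cyclically modulo $D$, so that $A_{D+1}=A_1$ and $A_{j+1}=\text{Pr}(A_j)$ for every $j$. For a fixed $n\in\N$ put $b_j=1$ if $n\in A_j$ and $b_j=0$ otherwise. The crucial point, supplied by Observation \ref{4: S(n) characterization of A-practicality}, is that membership $n\in\text{Pr}(A_j)=A_{j+1}$ is decided by the set $S(n)\cap A_j$ together with the single bit $b_j$: writing $\tau_j\in\{1,2,3\}$ for the case of that observation applying to the pair $(A_j,n)$, one has $b_{j+1}=1$ if $\tau_j=1$, $b_{j+1}=0$ if $\tau_j=3$, and $b_{j+1}=1-b_j$ if $\tau_j=2$. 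Moreover $S(n)\cap A_j$ depends only on $A_j\cap\{1,\ldots ,n-1\}$, since every proper divisor of $n$ is less than $n$.

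First I would record two preliminaries. Because $1\in\text{Pr}(X)$ for every $X\subseteq\N$ and each $A_j$ lies in the image of $\mathcal{PR}$, we have $1\in A_j$ for all $j$; hence for the prime $3$ the set $S(3)\cap A_j=\{1\}$ is practical with $S_{\{1\}}=1<3-1$, so $\tau_j=2$ for all $j$ and the corresponding bits alternate around the cycle. An alternating sequence indexed by a cyclic group has even order, so $D=2d$ for some $d\in\N$. Also $\mathcal{PR}$ has no fixed point, since $p\in A\div\text{Pr}(A)$ for every odd prime $p$ and every $A\subseteq\N$; in particular $A_1\neq A_2$.

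The core step is to prove, by induction on $n$, that $n\in A_j\Longleftrightarrow n\in A_{j+2}$ for all $j$. Fix $n$ and assume $A_j\cap\{1,\ldots ,n-1\}=A_{j+2}\cap\{1,\ldots ,n-1\}$ for all $j$ (the accumulated hypothesis, vacuous when $n=1$). Then $S(n)\cap A_j=S(n)\cap A_{j+2}$, so $\tau_j=\tau_{j+2}$; thus $\tau_j$ depends only on the parity of $j$, say $\tau_j=\sigma_{\mathrm{odd}}$ for odd $j$ and $\tau_j=\sigma_{\mathrm{even}}$ for even $j$. Now a short case analysis on the recursion finishes it. If $\sigma_{\mathrm{odd}}\in\{1,3\}$, then $b_k$ is constant (equal to $1$ or to $0$) over all even $k$; plugging this into the recursion at even indices makes $b_k$ constant over all odd $k$ as well, whatever $\sigma_{\mathrm{even}}$ is, so $b$ is constant on each parity class and $b_j=b_{j+2}$. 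The case $\sigma_{\mathrm{even}}\in\{1,3\}$ is symmetric. In the remaining case $\sigma_{\mathrm{odd}}=\sigma_{\mathrm{even}}=2$ the recursion reads $b_{j+1}=1-b_j$, whence $b_{j+2}=1-(1-b_j)=b_j$. This completes the induction, so $A_j=A_{j+2}$ for all $j$; combined with $D=2d$ and $A_1\neq A_2$ we obtain $A_{2j-1}=A_1$ and $A_{2j}=A_2$ for $j\in\{1,\ldots ,d\}$, with $2$ the exact period.

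The one delicate point — and the step I expect to look like the main obstacle — is the case $\tau_j=2$ for all $j$: there the recursion only relates $b_{j+1}$ to $b_j$ instead of determining it, so the claim seems to chase its own tail. The resolution is that an alternating sequence automatically satisfies $b_{j+2}=b_j$, which is precisely the conclusion wanted; the apparent circularity vanishes once one observes that the argument uses only the equality $\tau_j=\tau_{j+2}$ coming from the inductive hypothesis and never the individual values of $\tau_j$.
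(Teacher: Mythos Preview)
Your proof is correct and follows essentially the same route as the paper's: both arguments use the trichotomy of Observation~\ref{4: S(n) characterization of A-practicality} to run an induction on $n$ showing that $A_j\cap\{1,\ldots,n\}=A_{j+2}\cap\{1,\ldots,n\}$, after first establishing $D$ is even via the alternation at an odd prime. Your notation $b_j,\tau_j$ streamlines the case analysis, and your explicit remark that $A_1\neq A_2$ (no fixed points) is a harmless addition, but the underlying structure of the argument is the same.
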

\begin{proof}
Let $A_{D+1}:=A_1$. Firstly we prove that $D$ must be even. For the sake of contradiction assume that $D=2d+1$. By the Observation \ref{4: prime observation} we know that if $p$ is an odd prime, then $p\in A\div \Pr(A)$ for any $A\subset\N$. Therefore, by induction on $j$ we show that
\begin{itemize}
    \item $p\in A_1\Leftrightarrow p\in A_j$ for odd $j$,
    \item $p\in A_1\div A_j$ for even $j$.
\end{itemize}
In particular, $p\in A_1\div A_{2d+2} = A_1\div A_1 = \varnothing$, which is a contradiction. Now let $D=2d$ for some $d\in\N$. We will prove that $A_{2j-1}=A_1$ and $A_{2j}=A_2$ for each $j\in \{1,...,d\}$ by showing that 
\begin{equation}
B_{2j-1}^{(k)}=B_1^{(k)}\ \text{ and }\ B_{2j}^{(k)}=B_2^{(k)}    
\end{equation}
for each $j\in \{1,...,d+1\}$ and $k\in\N$, where $B_{j}^{(k)} := A_{j}\cap\{1,...,k\}$. We proceed with mathematical induction with respect to $k$. We know that $1\in\Pr(A)$ for any $A\subset\N$, so $1\in A_j$ for any $j\in\{2,...,D\}$ and $1\in A_1$ because $A_1=\Pr(A_D)$. Therefore $B_{j}^{(1)}=\{1\}$ for any $j\in\{1,...,D\}$. Now let us assume that (2) is true for $k=n-1$. Notice that we only need to prove that
$$B_{2j-1}^{(n)}\cap\{n\}=B_1^{(n)}\cap\{n\}\ \text{ and }\ B_{2j}^{(n)}\cap\{n\}=B_2^{(n)}\cap\{n\}$$
for each $j\in \{1,...,d+1\}$ because induction hypothesis implies that those sets agree on the set $\{1,...,n-1\}$. We make two observations for $j\in\{1,...,D\}$: 
\begin{enumerate}[label=(\roman*)]
    \item $S(n)\cap B^{(n)}_j = S(n)\cap B^{(n-1)}_j$, which follows from the fact that $n\notin S(n)$,
    \item $\Pr\left(B_j^{(n)}\right)\cap\{1,...,n\}=B_{j+1}^{(n)}$, which follows from the Observation \ref{4: independence on greater elements} applied to sets $B_j^{(n)}$ and $A_j$. 
\end{enumerate} We divide the proof into two cases.
\begin{enumerate}[label=\arabic*$\degree$]
    \item Assume that there exists $i\in\{1,...,D\}$ such that $$\left(S(n)\cap B_i^{(n)} \text{ is practical and }S_{S(n)\cap B_i^{(n)}}\geq n-1\right)\text{ or }\left(S(n)\cap B_i^{(n)}\text{ is not practical.}\right)$$ 
    Due to (i) we can change $B_i^{(n)}$ to $B_i^{(n-1)}$ in above expressions. Therefore, from the Observation \ref{4: S(n) characterization of A-practicality} and from the induction hypothesis it follows that $$\forall_{\underset{j\equiv i\modd{2}}{j\in\{1,...,D\}}}:n\in\Pr\left(B_j^{(n)}\right)\ \text{ or }\ \forall_{\underset{j\equiv i\modd{2}}{j\in\{1,...,D\}}}:n\notin\Pr\left(B_j^{(n)}\right),$$
    so (ii) implies that $$\forall_{\underset{j\equiv i+1\modd{2}}{j\in\{1,...,D\}}}:B_{j}^{(n)}=B_{i+1}^{(n)}$$ and since $B_{j+1}^{(n)} = \Pr\left(B_j^{(n)}\right)\cap\{1,...,n\}$ we also have $$\forall_{\underset{j\equiv i\modd{2}}{j\in\{1,...,D\}}}:B_{j}^{(n)}=B_{i}^{(n)}.$$
    \item Assume that for all $i\in\{1,...,D\}:$ $S(n)\cap B_{i}^{(n)}$is practical and $S_{S(n)\cap B_{i}^{(n)}}< n-1$.
    From the Observation \ref{4: S(n) characterization of A-practicality} and from (ii) we know that 
    $$\forall_{i\in\{1,...,D\}}:n\in B_{i}^{(n)}\div B_{i+1}^{(n)}.$$ Since $B_{D+1}^{(n)}=B_{1}^{(n)}$, $n$ is either only in odd indexed sets $B_i^{(n)}$ or only in even indexed sets $B_i^{(n)}$. Therefore the induction hypothesis implies that
    $$B_{2i-1}^{(n)}=B_1^{(n)}\ \text{ and }\ B_{2i}^{(n)}=B_2^{(n)}$$
    for each $i\in\{1,...,D\}$.
\end{enumerate}
\end{proof}

\begin{theorem}\label{4: existance of 2-periodic point}
    There exists a 2-periodic point of the mapping $\mathcal{PR}.$ In other words, there exists a set $A\subset\N$ such that $A=\Pr^2(A)$.
\end{theorem}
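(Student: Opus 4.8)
The goal is to construct an explicit set $A\subset\N$ with $\mathcal{PR}^2(A)=A$ but (presumably) $\mathcal{PR}(A)\neq A$, i.e.\ a genuine 2-cycle $\{A,B\}$ with $B=\mathcal{PR}(A)$, $A=\mathcal{PR}(B)$. The natural strategy is to build $A$ and $B$ simultaneously, level by level, using the key structural facts already established: by Observation \ref{4: independence on greater elements} membership of $n$ in $\mathcal{PR}(X)$ depends only on $X\cap\{1,\ldots,n\}$, and more precisely on $S(n)\cap X$; and by Observation \ref{4: S(n) characterization of A-practicality} the "trichotomy" controls whether $n\in\mathcal{PR}(X)$ in terms of practicality of $S(n)\cap X$ and the size of its element-sum. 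So I would attempt a recursive/inductive construction: having decided $A\cap\{1,\ldots,n-1\}$ and $B\cap\{1,\ldots,n-1\}$ in a way that is already "consistent" (meaning $B\cap\{1,\ldots,n-1\}=\mathcal{PR}(A)\cap\{1,\ldots,n-1\}$ and $A\cap\{1,\ldots,n-1\}=\mathcal{PR}(B)\cap\{1,\ldots,n-1\}$, using only the already-fixed data), decide whether to put $n$ into $A$ and whether to put $n$ into $B$, so that consistency is preserved at level $n$.

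The mechanism for the inductive step: given $S(n)\cap A$ and $S(n)\cap B$ (already determined), Observation \ref{4: S(n) characterization of A-practicality} tells us exactly whether $n\in\mathcal{PR}(A)$ and whether $n\in\mathcal{PR}(B)$ \emph{unless} we land in case (2), where $n$ lies in the symmetric difference and hence $n\in\mathcal{PR}(X)\iff n\notin X$. That is the only place we have freedom, and it is also the only place we need it. Concretely: to have $A=\mathcal{PR}(B)$ at level $n$ we need $[n\in A]\iff[n\in\mathcal{PR}(B)]$, and to have $B=\mathcal{PR}(A)$ we need $[n\in B]\iff[n\in\mathcal{PR}(A)]$. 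If $S(n)\cap B$ is not practical, or is practical with sum $\geq n-1$, then $n\in\mathcal{PR}(B)$ is forced, so we are forced to put (or not put) $n$ into $A$; similarly with the roles swapped. If instead $S(n)\cap B$ is practical with sum $<n-1$, then $n\in A\iff n\in\mathcal{PR}(B)\iff n\notin B$ — so we are free to choose $[n\in A]$ as long as we simultaneously set $[n\in B]$ to the opposite, and we must check this same choice is compatible with the $B=\mathcal{PR}(A)$ constraint. The cleanest route is to make $A$ and $B$ \emph{complementary} from some point on, i.e.\ arrange that for all large $n$ exactly one of $n\in A$, $n\in B$ holds and $S(n)\cap A$, $S(n)\cap B$ are always practical with small sum, so case (2) of Observation \ref{4: S(n) characterization of A-practicality} applies to both and everything is consistent by construction; the odd prime obstruction (every odd prime is in $A\div\mathcal{PR}(A)$) is automatically respected since complementary sets put odd primes on opposite sides.

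A good concrete candidate to try: let $A$ consist of $1$ together with all integers $n\geq 2$ with $n\equiv 0\pmod 2$ (or some thinned version), and $B$ the complementary-ish set, tuned so that for every $n\geq 3$ the set $S(n)\cap A$ (resp.\ $S(n)\cap B$) is always practical but has element-sum $<n-1$ — this can be forced by making $A$ and $B$ each miss "enough" small divisors, e.g.\ having $A$ contain only $1$ and certain powers of $2$ below any given bound so that $S(n)\cap A\subseteq\{1,2,4,\ldots,2^t\}$ with $2^{t+1}-1<n-1$. One then verifies the trichotomy lands in case (2) at every level for both sets, and that the complementarity makes the pair consistent. The \textbf{main obstacle} I anticipate is the bookkeeping of the base of the recursion and small cases: getting case (2) to hold \emph{uniformly} (for every $n$, not just eventually) requires that neither $S(n)\cap A$ nor $S(n)\cap B$ ever becomes practical-with-large-sum, which competes against the requirement that $S(n)\cap A$ always \emph{be} practical (so it must contain $1$, then $2$, etc., pushing its sum up); reconciling these — probably by keeping the sets extremely sparse (e.g.\ geometric growth, so that $S(n)\cap X$ for $n$ just above an element consists of that element alone plus a bounded tail) and handling the finitely many genuinely small $n$ by hand — is where the real work lies. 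If a uniform construction proves awkward, the fallback is the inductive existence argument sketched above: define $A\cap\{1,\ldots,n\}$ and $B\cap\{1,\ldots,n\}$ by recursion, at each step using case-analysis on the trichotomy applied to the already-built truncations, showing the "free" case always admits a consistent choice; this produces \emph{some} 2-periodic point without needing a closed form.
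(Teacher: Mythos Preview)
Your fallback plan---the inductive existence argument building $A$ and $B$ simultaneously level by level, using Observation~\ref{4: independence on greater elements} and the trichotomy of Observation~\ref{4: S(n) characterization of A-practicality}---is exactly the paper's proof. The paper writes out all nine cases of the trichotomy pair $\bigl(\text{case for }S(n)\cap A_{n-1},\ \text{case for }S(n)\cap B_{n-1}\bigr)$ explicitly and verifies that a consistent choice of $[n\in A]$, $[n\in B]$ exists in each; the only case with genuine freedom is the $(2,2)$ case you isolate, and the paper parametrises that choice by an auxiliary set $I\subset\N\setminus\{1\}$ (which it later exploits to show there are continuum many $2$-periodic points). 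Your closed-form candidate (complementary sparse sets forcing case $(2,2)$ everywhere) is not pursued in the paper and, as you already suspect, runs into tension between ``$S(n)\cap X$ practical'' and ``sum $<n-1$'' at small $n$; the paper simply avoids this by not seeking a closed form.
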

\begin{proof}
    Let $I\subset(\N\setminus\{1\})$. We define sets $A$ and $B$ as  unions of  nondecreasing sequences of sets $(A_n)_{n\in\N}$, $(B_n)_{n\in\N}$ where $A_1=B_1=\{1\}$ and
    \begin{enumerate}[label=\arabic*$\degree$]
        \item if $S(n)\cap A_{n-1}$ is practical, $S_{S(n)\cap A_{n-1}}\geq n-1$ and
        \begin{enumerate}[label=1.\arabic*$\degree$]
            \item if $S(n)\cap B_{n-1}$ is practical, $S_{S(n)\cap B_{n-1}}\geq n-1$, then $$A_n=A_{n-1}\cup\{n\}\text{ and }B_n=B_{n-1}\cup\{n\},$$
            \item if $S(n)\cap B_{n-1}$ is practical but $S_{S(n)\cap B_{n-1}}<n-1$, then $$A_n=A_{n-1}\text{ and }B_{n}=B_{n-1}\cup\{n\},$$
            \item if $S(n)\cap B_{n-1}$ is not practical, then $$A_n=A_{n-1}\text{ and }B_{n}=B_{n-1}\cup\{n\},$$
        \end{enumerate}
        \item if $S(n)\cap A_{n-1}$ is practical, $S_{S(n)\cap A_{n-1}}< n-1$ and
        \begin{enumerate}[label=2.\arabic*$\degree$]
            \item if $S(n)\cap B_{n-1}$ is practical, $S_{S(n)\cap B_{n-1}}\geq n-1$, then $$A_n=A_{n-1}\cup\{n\}\text{ and }B_n=B_{n-1},$$
            \item if $S(n)\cap B_{n-1}$ is practical but $S_{S(n)\cap B_{n-1}}<n-1$, then $$\left\{\begin{array}{lll}
               n\in I &\Longrightarrow & A_n=A_{n-1}\text{ and }B_{n}=B_{n-1}\cup\{n\},  \\
               n\notin I &\Longrightarrow& A_n=A_{n-1}\cup\{n\}\text{ and }B_{n}=B_{n-1}, 
            \end{array}\right.$$
            \item if $S(n)\cap B_{n-1}$ is not practical, then $$A_n=A_{n-1}\text{ and }B_{n}=B_{n-1}\cup\{n\},$$
        \end{enumerate}
        \item if $S(n)\cap A_{n-1}$ is not practical and
        \begin{enumerate}[label=3.\arabic*$\degree$]
            \item if $S(n)\cap B_{n-1}$ is practical, $S_{S(n)\cap B_{n-1}}\geq n-1$, then $$A_n=A_{n-1}\cup\{n\}\text{ and }B_n=B_{n-1},$$
            \item if $S(n)\cap B_{n-1}$ is practical but $S_{S(n)\cap B_{n-1}}<n-1$, then
            $$A_n=A_{n-1}\cup\{n\}\text{ and }B_{n}=B_{n-1},$$
            \item if $S(n)\cap B_{n-1}$ is not practical, then $$A_n=A_{n-1}\text{ and }B_{n}=B_{n-1},$$
        \end{enumerate} 
    \end{enumerate}
    for $n\geq 2$. What it means is that we construct $A_n$ and $B_n$ based on the statement of the Observation \ref{4: S(n) characterization of A-practicality} so that $$B_n\cap\{n\}=\Pr(A_{n-1})\cap\{n\}\text{ and }A_n\cap\{n\}=\Pr(B_{n-1})\cap\{n\}.$$
Since $S(n)\cap A_{n-1}=S(n)\cap A_{n}$ and $S(n)\cap B_{n-1}=S(n)\cap B_{n}$ we also have 
\begin{equation}
B_n\cap\{n\}=\Pr(A_{n})\cap\{n\}\text{ and }A_n\cap\{n\}=\Pr(B_{n})\cap\{n\}    
\end{equation}
and from the mathematical induction it follows that 
\begin{equation}
B_n=B_n\cap\{1,...,n\}=\Pr(A_{n})\cap\{1,...,n\}\text{ and }A_n=A_n\cap\{1,...,n\}=\Pr(B_{n})\cap\{1,...,n\}.    
\end{equation}

Indeed, the base case is true due to the fact that $A_1=B_1=\{1\}$ and $\Pr(A_1)=\Pr(B_1)=\N$. For the induction step notice that 
$$A_k\cap \{1,...,k-1\}=A_{k-1}\text{ and }B_k\{1,...,k-1\}= B_{k-1},$$
so from the Observation \ref{4: independence on greater elements} it follows that
\begin{align*}
   &\Pr(A_k)\cap \{1,...,k-1\}=\Pr(A_{k-1})\cap\{1,...,k-1\}\text{ and}\\
   &\Pr(B_k)\cap \{1,...,k-1\}=\Pr(B_{k-1})\cap\{1,...,k-1\}. 
\end{align*}
Thus, if we assume that that (4) is true for $n=k-1$, then it immediately follows from the above and from (3) that (4) is true for $n=k$.

\mycomment{Indeed base case is true due to the fact that $A_1=B_1=\{1\}$ and $\Pr(A_1)=\Pr(B_1)=\N$. Now assume that (3) is true for $n=k-1$. Then 
\begin{equation}
B_{k}\cap\{k\}=\Pr(A_{k})\cap\{k\}\text{ and }A_{k}\cap\{k\}=\Pr(B_{k})\cap\{k\},    
\end{equation}
which, as mentioned earlier, follows from the way $A_k$ and $B_k$ are defined and from the Observation \ref{4: S(n) characterization of A-practicality} and from the fact that $S(k)\cap A_{k-1}=S(k)\cap A_{k}$ and $S(k)\cap B_{k-1}=S(k)\cap B_{k}$. Since $B_k\cap\{1,...,k-1\}=B_{k-1}$ and $A_k\cap\{1,...,k-1\}=A_{k-1}$ it follows from the Observation \ref{4: independence on greater elements} that 
$$\Pr(B_k)\cap\{1,...,k-1\}=\Pr(B_{k-1})\text{ and }\Pr(A_k)\cap\{1,...,k-1\}=\Pr(A_{k-1}),$$
thus induction hypothesis implies that 
$$B_{k}\cap\{1,...,k-1\}=\Pr(A_{k})\cap\{1,...,k-1\}\text{ and }A_{k}\cap\{1,...,k-1\}=\Pr(B_{k})\cap\{1,...,k-1\}.$$
Thesis follows from (4) and the above line.}

Now for $A=\bigcup_{n\in\N}A_n$ and $B=\bigcup_{n\in\N}B_n$ we have 
$$k\in A\Longleftrightarrow k\in A_k\underset{(4)}{\Longleftrightarrow}k\in \Pr(B_k) \Longleftrightarrow k\in\bigcup_{n\in\N}\Pr(B_n)\underset{\ref{4: minimal maximal theorem}}\Longleftrightarrow k\in \Pr(B)$$
and analogously $k\in B\Longleftrightarrow k\in \Pr(A)$, so
$$A=\Pr(B) \text{ and }B= \Pr(A),$$
thus $A=\Pr(B)=\Pr\left(\Pr(A)\right)$.
\end{proof}
\begin{theorem}
    There are continuum $2$-periodic points of the mapping $\mathcal{PR}$.
\end{theorem}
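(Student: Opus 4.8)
The plan is to re-examine the construction in the proof of Theorem~\ref{4: existance of 2-periodic point}: it takes an arbitrary admissible set $I\subseteq\N\setminus\{1\}$ and produces a $2$-periodic point $A=A(I)=\bigcup_{n\in\N}A_n$ of $\mathcal{PR}$ (with companion $\Pr(A(I))=\bigcup_{n\in\N}B_n$). Since every such $I$ already yields a genuine $2$-periodic point, it suffices to exhibit continuum many admissible sets $I$ for which the sets $A(I)$ are pairwise distinct.

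First I would isolate the one place where the construction consults $I$: namely case~$2.2\degree$, where both $S(n)\cap A_{n-1}$ and $S(n)\cap B_{n-1}$ are practical and both of the sums $S_{S(n)\cap A_{n-1}}$, $S_{S(n)\cap B_{n-1}}$ are smaller than $n-1$; there one has $n\in A_n\iff n\notin I$ (and $n\in B_n\iff n\in I$). The crucial point is that every odd prime $p$ is forced into case~$2.2\degree$, no matter how the (non-deterministic) construction has run through step $p-1$: since $S(p)=\{1\}$ and $1\in A_1\subseteq A_{p-1}$, $1\in B_1\subseteq B_{p-1}$, we get $S(p)\cap A_{p-1}=S(p)\cap B_{p-1}=\{1\}$, which is practical with sum $1<p-1$. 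Combining this with the identities $A_n=\Pr(B_n)\cap\{1,\ldots,n\}$ and $k\in A(I)\iff k\in A_k$ established in that proof gives $p\in A(I)\iff p\notin I$ for every $p\in\mathbb{P}_{\geq 3}$.

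Now restrict to $I\subseteq\mathbb{P}_{\geq 3}$ (these are admissible, as $\mathbb{P}_{\geq 3}\subseteq\N\setminus\{1\}$). By the previous step, $A(I)\cap\mathbb{P}_{\geq 3}=\mathbb{P}_{\geq 3}\setminus I$, so $I\mapsto A(I)$ is injective on $\mathcal{P}(\mathbb{P}_{\geq 3})$: if $I_1\neq I_2$, pick an odd prime $p$ in their symmetric difference; then $p$ lies in exactly one of $A(I_1)$, $A(I_2)$, so these sets differ. As $\mathbb{P}_{\geq 3}$ is countably infinite, $\mathcal{P}(\mathbb{P}_{\geq 3})$ has cardinality continuum, and each $A(I)$ satisfies $A(I)=\Pr^2(A(I))$ by Theorem~\ref{4: existance of 2-periodic point}; hence $\mathcal{PR}$ has continuum many $2$-periodic points.

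The only step requiring care is verifying that an odd prime is funneled into case~$2.2\degree$ independently of the earlier choices, but this comes down to the elementary facts that $S(p)=\{1\}$ for a prime $p$ and that $1$ belongs to every $A_n$ and every $B_n$, so no real obstacle arises once the bookkeeping of Theorem~\ref{4: existance of 2-periodic point} is recalled. (One could also remark that all these points have exact period $2$ rather than $1$, since $\mathcal{PR}$ has no fixed point, but that is not needed for the statement.)
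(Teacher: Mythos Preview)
Your proposal is correct and follows essentially the same approach as the paper: both argue that every odd prime $p$ necessarily lands in case~$2.2\degree$ of the construction (since $S(p)\cap A_{p-1}=S(p)\cap B_{p-1}=\{1\}$), so the membership of $p$ in $A(I)$ is determined by whether $p\in I$, yielding an injection from $\mathcal{P}(\mathbb{P}_{\geq 3})$ into the set of $2$-periodic points. Your explicit restriction to $I\subseteq\mathbb{P}_{\geq 3}$ is slightly cleaner than the paper's use of all $I\subseteq\mathbb{P}$, since it guarantees that distinct $I_1,I_2$ differ at an \emph{odd} prime.
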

\begin{proof}
    Take two distinct sets $I_1,I_2\subset\mathbb{P}$ and let $p\in I_1\div I_2$ be odd. Let $\left(A_n^{(1)}\right)_{n\in\N}$ and $\left(A_n^{(2)}\right)_{n\in\N}$ be sequences of sets constructed respectively for sets $I_1$ and $I_2$ during the process described in the proof of the Theorem \ref{4: existance of 2-periodic point}. We can notice that any set $\{1\}\subset C\subset\{1,...,p-1\}$ satisfies
    $$S(p)\cap C =\{1\}\text{ is practical but }S_{S(p)\cap C}<p-1,$$
    so when constructing sets $A_p^{(1)}$ and $A_p^{(2)}$ we follow the 2.2$\degree$ case. Since $p\in I_1\div I_2$ we know that $p\in A_p^{(1)}\div A_p^{(2)}$, hence $p\in A^{(1)}\div A^{(2)}$, where $A^{(1)}=\bigcup_{n\in\N}A_n^{(1)}$ and $A^{(2)}=\bigcup_{n\in\N}A_n^{(2)}$.
    
    This proves that if $I_1,I_2\subset\mathbb{P}$ are different, then two sets $A^{(1)}$, $A^{(2)}$ constructed in the process described in the proof of the Theorem \ref{4: existance of 2-periodic point} based on the sets $I_1$, $I_2$, are also different. Since $|\mathcal{P}(\mathbb{P})|=\mathfrak {c}$ and each of the subsets of the set of prime numbers yields a different 2-periodic point of $\mathcal{PR}$, there are continuum of such points.
\end{proof}
\begin{ex}\label{pairexample}
    If we set $I = \N\setminus\{1\}$, then the process described in the proof of Theorem \ref{4: existance of 2-periodic point} yields the following set
    $$A=\{1,2,4,6,8,16,20,24,28,30,32,40,42,56,60,64,72,...\},$$
    where 
    \begin{align*}
        \Pr(A)&=\{1,2,3,4,5,7,8,9,10,11,12,13,14,15,16,17,19,21,22,23,25,...\}\\
        &=\N\setminus\{6, 18, 20, 24, 28, 30, 40, 42, 54, 56, 60, 66, 72, ...\}.
    \end{align*}
    Note that $A\cup\text{Pr}(A)\neq\N$ as $18, 54, 66\not\in A\cup\text{Pr}(A)$.
\end{ex}

\section*{Acknowledgements}

The authors wish to thank Maciej Ulas for discussions that inspired us to explore the subject of $A$-practical numbers deeper.

\end{document}